\newtheorem{theorem}{Theorem}[section]
\newtheorem{example}[theorem]{Example}
\newtheorem{corollary}[theorem]{Corollary}
\newtheorem{lemma}[theorem]{Lemma}
\newtheorem{proposition}[theorem]{Proposition}
\newtheorem{definition}[theorem]{Definition}
\newtheorem{non-theorem}{Non-Theorem}
\newtheorem*{remark}{Remark}
\newcommand{\qgroup}{U_{q}(\mathfrak{gl}(1|1))}
\newcommand{\q}{\textbf{q}}
\newcommand{\R}{\check{R}}
\numberwithin{equation}{section}
\begin{document}
	\title {Twistings and the Alexander polynomial}
	\author{Daren Chen}
	\maketitle
	\begin{abstract}
		We give an explicit formula of the Alexander polynomial of the link obtained by adding an arbitrary number of full twists to positively oriented parallel $n$-strands in terms of the Alexander polynomials of the links obtained by adding $0,1,...,n-1$ full twists. From this, we see that the Alexander polynomials stabilize after adding sufficiently many full twists. The main tool used in the computation is expressing the Alexander polynomial using the vector space representation of  $\qgroup$.
	\end{abstract}
\section{Introduction}

In this paper we study the change in the Alexander polynomials of links under inserting full twists. More explicitly, suppose $L$ is an oriented link with some specific link diagram, such that some part of the link diagram consists of $n$-parallel strands of the same orientation. Let $\mathcal{L}$ denote  this particular link diagram of $L$ with this choice of $n$-parallel strands. Let $\mathcal{L}_m$ denote the link obtained from $L$ by inserting $m$ full twists along the $n$-parallel strands and let $\mathcal{L}_0=L$. We obtain the following expression of the Alexander polynomial $\Delta(\mathcal{L}_m)$ of $\mathcal{L}_m$ in terms of $\Delta(\mathcal{L}_0),...,\Delta(\mathcal{L}_{n-1})$.
\begin{figure}[h]
	\[
	{
		\fontsize{8pt}{10pt}\selectfont
		\def\svgwidth{2.5in}
\begingroup%
  \makeatletter%
  \providecommand\color[2][]{%
    \errmessage{(Inkscape) Color is used for the text in Inkscape, but the package 'color.sty' is not loaded}%
    \renewcommand\color[2][]{}%
  }%
  \providecommand\transparent[1]{%
    \errmessage{(Inkscape) Transparency is used (non-zero) for the text in Inkscape, but the package 'transparent.sty' is not loaded}%
    \renewcommand\transparent[1]{}%
  }%
  \providecommand\rotatebox[2]{#2}%
  \newcommand*\fsize{\dimexpr\f@size pt\relax}%
  \newcommand*\lineheight[1]{\fontsize{\fsize}{#1\fsize}\selectfont}%
  \ifx\svgwidth\undefined%
    \setlength{\unitlength}{197.67734284bp}%
    \ifx\svgscale\undefined%
      \relax%
    \else%
      \setlength{\unitlength}{\unitlength * \real{\svgscale}}%
    \fi%
  \else%
    \setlength{\unitlength}{\svgwidth}%
  \fi%
  \global\let\svgwidth\undefined%
  \global\let\svgscale\undefined%
  \makeatother%
  \begin{picture}(1,0.64037158)%
    \lineheight{1}%
    \setlength\tabcolsep{0pt}%
    \put(0,0){\includegraphics[width=\unitlength,page=1]{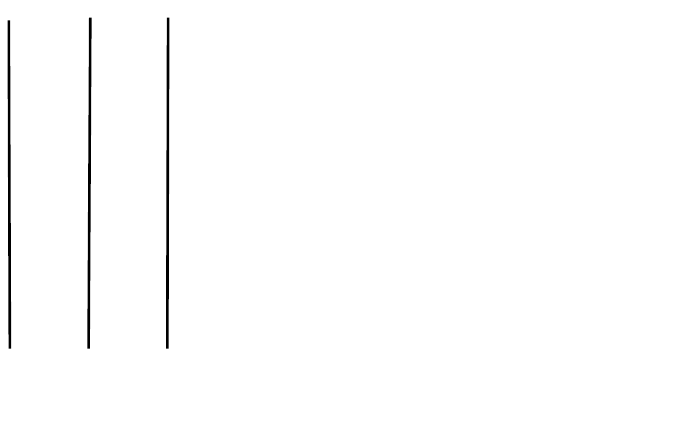}}%
    \put(0.07797883,0.01059839){\makebox(0,0)[lt]{\lineheight{1.25}\smash{\begin{tabular}[t]{l}$L_0$\end{tabular}}}}%
    \put(0,0){\includegraphics[width=\unitlength,page=2]{full_twist.pdf}}%
    \put(0.31526897,0.38710768){\makebox(0,0)[lt]{\lineheight{1.25}\smash{\begin{tabular}[t]{l}Adding one full twist\end{tabular}}}}%
    \put(0,0){\includegraphics[width=\unitlength,page=3]{full_twist.pdf}}%
    \put(0.85691492,0.0094525){\makebox(0,0)[lt]{\lineheight{1.25}\smash{\begin{tabular}[t]{l}$L_1$\end{tabular}}}}%
    \put(0,0){\includegraphics[width=\unitlength,page=4]{full_twist.pdf}}%
  \end{picture}%
\endgroup%

	}
	\]
	\caption{A full twist along $3$-parallel strands}
	\label{fig:cobordism-a}
\end{figure}

\begin{proposition}
	\label{prop:alexpoly}
	 For each $m\geq 0$, there exist some Laurent polynomials $f_{m,j,n}(t)$ which only depend on $m,j,n$ but not on the link $L$,  such that \[\Delta(\mathcal{L}_m) = \sum_{j=0}^{n-1}f_{m,j,n}(t)\Delta(\mathcal{L}_j).\]
\end{proposition}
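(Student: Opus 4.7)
The plan is to use the Reshetikhin--Turaev-type construction for the Alexander polynomial coming from the two-dimensional vector representation $V$ of $\qgroup$, as advertised in the abstract. In this formalism, a link diagram is read off as a composition of $\qgroup$-linear morphisms between tensor powers of $V$. If one cuts $\mathcal{L}$ open along the $n$-strand twist region, one can write $\Delta(\mathcal{L}_m) = F(T_n^m)$, where $T_n \in \mathrm{End}_{\qgroup}(V^{\otimes n})$ is the operator assigned to one full twist on $n$ positively oriented strands, and $F \colon \mathrm{End}(V^{\otimes n}) \to \mathbb{Z}[t^{\pm 1}]$ is an $L$-dependent but $m$-independent linear functional obtained by closing up the rest of the diagram (incorporating the usual ``cut one strand'' / modified trace fix needed because $V$ has superdimension zero).

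The heart of the proof is then to show that $T_n$ satisfies a polynomial equation of degree at most $n$ over $\mathbb{Z}[t^{\pm 1}]$. Because the full twist generates the center of the pure braid group $P_n$, the operator $T_n$ commutes with the entire braid group action on $V^{\otimes n}$, and therefore acts as a scalar on each isotypic component under the joint action of $\qgroup$ and its commutant. The key input, special to $\mathfrak{gl}(1|1)$, is that $V^{\otimes n}$ decomposes into exactly $n$ such isotypic components, giving at most $n$ distinct eigenvalues $\lambda_1(t), \dots, \lambda_n(t)$ for $T_n$. Hence the minimal polynomial of $T_n$ has degree $\leq n$, and from $\prod_{i=1}^{n}\bigl(T_n - \lambda_i(t)\bigr) = 0$ one extracts Laurent polynomials $c_{m,j,n}(t)$ depending only on the $\lambda_i(t)$ (hence only on $m,j,n$) such that
\[
T_n^m \;=\; \sum_{j=0}^{n-1} c_{m,j,n}(t)\, T_n^j.
\]

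Applying the $L$-dependent functional $F$ to both sides turns this into
\[
\Delta(\mathcal{L}_m) \;=\; \sum_{j=0}^{n-1} c_{m,j,n}(t)\, F(T_n^j) \;=\; \sum_{j=0}^{n-1} f_{m,j,n}(t)\, \Delta(\mathcal{L}_j),
\]
with $f_{m,j,n}(t) := c_{m,j,n}(t)$ independent of $L$, which is the claimed identity. The step I expect to be the main obstacle is the representation-theoretic one of pinning down the decomposition of $V^{\otimes n}$ and the action of $T_n$: the category of $\qgroup$-modules is \emph{not} semisimple, so one must argue carefully on indecomposable summands that $T_n$ still has minimal polynomial of degree $\leq n$, and identify the eigenvalues explicitly enough to confirm they lie in $\mathbb{Z}[t^{\pm 1}]$. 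Once this eigenvalue picture is in place the rest is a formal Cayley--Hamilton calculation together with RT functoriality.
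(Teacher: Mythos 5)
Your overall architecture is the same as the paper's: the full twist acts on $V^{\otimes n}$ with at most $n$ eigenvalues, one for each highest weight $(n-k)\epsilon_1+k\epsilon_2$, so $\Phi(\tau^m)$ lies in the span of $\Phi(\tau^0),\dots,\Phi(\tau^{n-1})$, and applying the $L$-dependent, $m$-independent functional coming from the rest of the diagram gives the claimed identity. (The paper phrases the linear algebra as inverting a Vandermonde matrix rather than reducing $x^m$ modulo the minimal polynomial; your Cayley--Hamilton framing is actually slightly cleaner for seeing that the coefficients are Laurent polynomials, since one divides by a monic polynomial whose roots $q^{n(n-1-2k)}$ are units in $\mathbb{C}[q^{\pm1}]$, whereas the paper asserts this somewhat loosely after dividing by $\det B(n)$.) Semisimplicity, which you flag as a worry, is not an issue here: every weight $(n-k)\epsilon_1+k\epsilon_2$ occurring in $V^{\otimes n}$ is typical, i.e.\ $\langle\lambda,h_1+h_2\rangle=n\neq 0$, so $V^{\otimes n}\cong\bigoplus_k\binom{n-1}{k}L\big((n-k)\epsilon_1+k\epsilon_2\big)$ is completely reducible.

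The genuine gap is in your justification that $T_n$ acts by a scalar on each isotypic component. Centrality of the full twist in $B_n$ only tells you that $T_n$ commutes with the image of $B_n$ inside $\mathrm{End}_{\qgroup}(V^{\otimes n})$; to conclude by Schur's lemma that $T_n$ is scalar on each block $L\big((n-k)\epsilon_1+k\epsilon_2\big)\otimes M_k$ you need $T_n$ to commute with the \emph{entire} commutant $\mathrm{End}_{\qgroup}(V^{\otimes n})\cong\prod_k M_{\binom{n-1}{k}}(\mathbb{C}(q))$, i.e.\ you need the braid group image to generate the full commutant --- a Schur--Weyl-type surjectivity statement that you neither state nor prove. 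Since the multiplicity spaces have dimension $\binom{n-1}{k}>1$, an arbitrary $\qgroup$-equivariant operator commuting with the $B_n$-image could a priori fail to be scalar on a block and have minimal polynomial of degree up to $2^{n-1}$ rather than $n$, which destroys the degree bound your whole argument rests on. The paper avoids invoking Schur--Weyl duality by computing directly: it diagonalizes $\Phi(\sigma_{n-1}\cdots\sigma_1)$ on the $(n-1)$-dimensional space $H_1$ of highest weight vectors of weight $(n-1)\epsilon_1+\epsilon_2$, finds eigenvalues $e^{2\pi ij/n}q^{n-3}$ whose $n$-th powers all equal $q^{n(n-3)}$, and then transports this to $H_k$ through an explicit isomorphism $\wedge^kH_1\cong H_k$ that intertwines the braid action up to a factor $q^{k-1}$ per crossing, yielding the explicit scalars $q^{n(n-1-2k)}$. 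You would need either to supply this computation (or an equivalent one) or to prove the Schur--Weyl surjectivity before your ``formal Cayley--Hamilton calculation'' can begin; identifying the eigenvalues explicitly is also needed to know they are distinct, without which the degree-$n$ relation still holds but the coefficients are not pinned down by interpolation.
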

See Definitions \ref{def:vandermonde} and \ref{def:coefficients} for the explicit definition of $f_{m,j,n}(t)$. By examining the definition of $f_{m,j,n}(t)$, we get the following stabilization result for the Alexander polynomial $\Delta(\mathcal{L}_m)$ as $m\to \infty$.

\newpage
\begin{proposition}
	\label{prop:alexstab}
	 The Alexander polynomials $\Delta(\mathcal{L}_m)$ stabilize as $m\to \infty$ in the following sense:
	 
	 For each link diagram  $\mathcal{L}$ with a chosen part consisting of $n$-parallel strands, there exists a Laurent series $h_{\mathcal{L}}(t)$ with finitely many terms of negative degree in $t$, and some integer $r \in \left[\frac{n-1}{2},n-1\right]$, such that for any $k\in \mathbb{N}$, there exists $N\in \mathbb{N}$ where for any $m\geq N$, the first $k$ terms in the increasing order of degree of $t$ of $\Delta(\mathcal{L}_m)$ agree with the first $k$ terms of  \[t^{mn(n-1-2r)/2}h_{\mathcal{L}}(t).\]
\end{proposition}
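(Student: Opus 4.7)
My strategy is to exploit the Vandermonde/eigenvalue structure underlying Proposition \ref{prop:alexpoly}. From the vector representation of $\qgroup$ used in its proof, the full twist on $n$ positively oriented parallel strands acts on $V^{\otimes n}$ with $n$ distinct eigenvalues $\lambda_j(t) = t^{a_j}$, $j = 0, 1, \ldots, n-1$, where the exponent $a_j = n(n-1-2j)/2$ (as suggested by the shift factor in the statement) is strictly decreasing in $j$. This reformulation is implicit in Definitions \ref{def:vandermonde} and \ref{def:coefficients}, since $f_{m,j,n}(t)$ is precisely the Lagrange interpolant expressing $\lambda^m$ as a polynomial of degree less than $n$ in $\lambda$ at the nodes $\lambda_0, \ldots, \lambda_{n-1}$. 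Inverting the Vandermonde to express $\lambda_j^m$ directly gives
\[\Delta(\mathcal{L}_m) = \sum_{j=0}^{n-1} \alpha_j(\mathcal{L}) \, t^{m a_j},\]
where the coefficients $\alpha_j(\mathcal{L})$ depend only on $\Delta(\mathcal{L}_0), \ldots, \Delta(\mathcal{L}_{n-1})$ and the $\lambda_i$'s, and are rational in $t$.

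Next, I set $r := \max\{\, j : \alpha_j(\mathcal{L}) \neq 0 \,\}$, so that $a_r$ is the strictly smallest exponent appearing in the expansion above. Factoring out $t^{m a_r}$ gives
\[t^{-m a_r} \Delta(\mathcal{L}_m) = \alpha_r(\mathcal{L}) + \sum_{j < r} \alpha_j(\mathcal{L}) \, t^{m(a_j - a_r)}.\]
Each $\alpha_j(\mathcal{L})$ is rational, with denominators built from $t^{a_i} - t^{a_k}$; expanding such factors in the canonical direction at $t = 0$ (i.e.\ after pulling out the smaller power) produces a Laurent series with finitely many negative-degree terms. Since $a_j - a_r > 0$ for $j < r$, for any prescribed $k$ I can choose $N$ so large that, for every $m \ge N$ and $j < r$, the summand $\alpha_j(\mathcal{L}) \, t^{m(a_j - a_r)}$ has lowest-appearing degree exceeding that of $\alpha_r(\mathcal{L})$ by more than $k$. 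Defining $h_{\mathcal{L}}(t) := \alpha_r(\mathcal{L})$, this gives agreement of the first $k$ terms of $\Delta(\mathcal{L}_m)$ with those of $t^{m a_r} h_{\mathcal{L}}(t)$.

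The step I expect to be the main obstacle is the bound $r \in [\tfrac{n-1}{2}, n-1]$. The upper bound $r \le n-1$ is immediate. The lower bound $r \ge (n-1)/2$ is the assertion that at least one $\alpha_j$ with $j \ge (n-1)/2$ is nonzero; I would try to deduce it from the symmetry $\Delta(L)(t) = \pm \Delta(L)(t^{-1})$ of the Alexander polynomial combined with the symmetry of the exponents $\{a_0, \ldots, a_{n-1}\}$ about $0$, which together should force the $\alpha_j$'s to satisfy a compatibility that rules out $\alpha_{\lceil (n-1)/2 \rceil} = \cdots = \alpha_{n-1} = 0$. This algebraic comparison is the subtle point; by contrast, the eigenvalue identification $a_j = n(n-1-2j)/2$ is a routine representation-theoretic calculation internal to the $\qgroup$ framework.
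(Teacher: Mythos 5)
Your argument is essentially the paper's: it regroups Proposition \ref{prop:alexpoly} by the eigenvalue exponents $a_i=n(n-1-2i)/2$ coming from the full twist, identifies the coefficient $\alpha_i(\mathcal{L})$ with the series $\sum_{j}C(n)_{i+1,j+1}|_{q=t^{1/2}}\Delta(\mathcal{L}_j)$ expanded with finitely many negative-degree terms, takes $r$ to be the largest index with $\alpha_r\neq 0$, and isolates the lowest-order block for $m$ large. The only small divergence is at the bound $r\geq \frac{n-1}{2}$: the symmetry $\Delta(t)=\pm\Delta(t^{-1})$ does not \emph{rule out} the vanishing of all $\alpha_j$ with $j\geq \frac{n-1}{2}$ (this can happen, e.g.\ for split links); rather it forces \emph{all} $\alpha_j$ to vanish in that case, so $\Delta(\mathcal{L}_m)\equiv 0$ and the statement holds trivially, which is exactly how the paper disposes of this case.
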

By the symmetry of the Alexander polynomials, a similar result holds for the last $k$ terms as well. 

Since the Laurent series $h_{\mathcal{L}}(t)$ is defined as some $\mathbb{C}(q)$-linear combination of the Alexander polynomials $\Delta(\mathcal{L}_m)$ for $m\in \{0,1,2,...,n-1\}$, it satisfies the usual skein relations for Alexander polynomials if we change a crossing away from the chosen $n$-parallel strands. We can think of $h_\mathcal{L}(t)$ as an invariant of the $(n,n)$-tangle obtained by cutting $L$ along the $n$-parallel strands.

The main tool used in this paper is the formulation of the Alexander polynomial as a $\qgroup$-quantum invariant. See \cite{sartori2015alexander} for a detailed explanation of this point of view. In brief, we study the $\qgroup$-equivariant maps induced by oriented tangles on $V^{\otimes n}$, where $V$ is the vector space representation of $\qgroup$. Here we use the assumption that all the $n$-parallel strands are of the same orientation. (In general, we would also need the dual space $V^*$ of $V$.) Also, as all the tangles are oriented upwards in our situation, we can ignore the orientation of the tangle, and consider the action of upward-oriented tangles as an action of the braid group $B_n$ on $V^{\otimes n}$, denoted by \[\Phi: B_n \to End_{\qgroup}(V^{\otimes n}).\] 

 The $\qgroup$-module $V^{\otimes n}$ decomposes as a direct sum
	\begin{equation*}
	V^{\otimes n} \cong \bigoplus^{n-1}_{k=0} {{n-1}\choose{k}}L\big((n-k)\epsilon_1+k\epsilon_2\big),
\end{equation*} 
where each $L\big((n-k)\epsilon_1+k\epsilon_2\big)$ is a $2$-dimensional irreducible representation of $\qgroup$, generated by a highest weight vector with weight $(n-k)\epsilon_1+k\epsilon_2$ as a $\qgroup$-module. Then it is enough to study the action of $B_n$ on the vector space spanned by highest weight vectors of $V^{\otimes n}$ of weights $(n-k)\epsilon_1+k\epsilon_2$ for each $k\in\{0,..,n-1\}$. Denote this space by
	\begin{equation*}
	H_k = \left\{v\in V^{\otimes n} \mid E(v)=0, \,\, \q^h v= q^{\langle h,(n-k)\epsilon_1 + k\epsilon_2\rangle}v\right\}.
\end{equation*}
First, we define a map \[\psi_k:\wedge^k H_1 \to H_k,\] and prove that $\psi_k$ is an isomorphism between $\mathbb{C}(q)$-vector spaces. This is proved by choosing a good basis of $H_k$. See Proposition \ref{prop:basis} and Proposition \ref{prop:wedge product} for the details.

Then we show $\psi_k$ is `almost' an isomorphism of $B_n$-modules, `almost' because they are different by some powers of $q$. More explicitly,
\begin{proposition}
	\label{prop:braid}
	 	For any $k=1,...,n-1$, any $v\in \wedge^k H_1$ and any one-crossing generator $\sigma_t\in B_n$, we have: 
	 \begin{equation*}
	 	\psi_k(\sigma_t\cdot v) = q^{k-1} \sigma_t\cdot \psi_k(v),
	 \end{equation*}
	 where the action of $\sigma_t$ on $\wedge^k H_1$ is the diagonal action of $\sigma_t$ on each component of  $\wedge^k H_1$.
\end{proposition}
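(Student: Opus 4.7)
The plan is a direct computation on an explicit basis of $\wedge^k H_1$. Using the basis $\{w_1, \ldots, w_{n-1}\}$ of $H_1$ provided by Proposition \ref{prop:basis}, where each $w_i$ has support $\{i, i+1\}$ in $V^{\otimes n}$ (meaning all other tensor factors carry $v_+$), it suffices by linearity and the diagonal nature of the $B_n$-action on the wedge product to verify the identity on basis vectors $v = w_{i_1} \wedge \cdots \wedge w_{i_k}$ with $i_1 < \cdots < i_k$, using the explicit description of $\psi_k(v) \in H_k \subset V^{\otimes n}$ from Proposition \ref{prop:wedge product}. Since $\sigma_t$ acts on $V^{\otimes n}$ via the $\qgroup$-matrix $\R$ on tensor positions $t, t+1$ and trivially elsewhere, I would begin by tabulating the action of $\R$ on a weight basis of $V \otimes V$, noting in particular that $\R(v_+ \otimes v_+) = q \, v_+ \otimes v_+$ and that the highest weight vector of $L(\epsilon_1+\epsilon_2)$ is an $\R$-eigenvector with eigenvalue $-q^{-1}$.

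The proof then splits by case analysis on the number $c$ of wedge factors $w_{i_j}$ whose support $\{i_j, i_j+1\}$ meets $\{t, t+1\}$. Each spectator factor (with disjoint support) contributes a clean factor $q$ to the left-hand side, since $\sigma_t \cdot w_{i_\ell} = q \, w_{i_\ell}$ when positions $t, t+1$ of $w_{i_\ell}$ both carry $v_+$; this accounts for $q^{k-c}$ on the left coming from spectators alone. On the right, the tensor positions of $\psi_k(v)$ lying outside $\bigcup_j \{i_j, i_j+1\}$ carry $v_+$ in every term, so $\R$ contributes only through the interacting region. When $c=0$ the left picks up $q^k$ and the right picks up $q$, yielding exactly the claimed ratio $q^{k-1}$; this is the model case that explains the appearance of $q^{k-1}$.

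The main obstacle is the overlap cases $c \in \{1, 2\}$. Here, expanding $\sigma_t \cdot w_{i_j}$ in the basis $\{w_i\}$ produces short linear combinations, for example $\sigma_{i_j} w_{i_j} = -q^{-1} w_{i_j}$ and $\sigma_{i_j \pm 1} w_{i_j}$ lying in the span of $\{w_{i_j \pm 1}, w_{i_j}\}$, and diagonally wedging these expansions against the spectator factors yields on the left a sum of basis vectors of $\wedge^k H_1$. One must then verify that $\psi_k$ applied to this sum agrees term by term with $q^{k-1} \cdot \sigma_t \cdot \psi_k(v)$, computed directly from Proposition \ref{prop:wedge product}. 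The subcase $c = 2$, which arises only when $i_{j+1} = i_j + 1$ and $t \in \{i_j, i_{j+1}\}$, is the most delicate, since two wedge factors get shuffled simultaneously and some terms vanish by the antisymmetry of the wedge; here one must check that the surviving terms, together with the $-q^{-1}$ eigenvalue contributions, conspire to produce exactly the $q^{k-1}$ discrepancy required by the claim.
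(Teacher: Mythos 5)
Your overall strategy is the same as the paper's: reduce to the standard basis $e_{i_1}\wedge\cdots\wedge e_{i_k}$ of $\wedge^k H_1$, use the fact that $\psi_k$ of such a basis vector is the explicit vector $\phi(s)$, and compare both sides by a case analysis on how the crossing position $\{t,t+1\}$ meets the supports of the wedge factors (the paper organizes the same cases by the position of $t$ relative to the blocks of the string $s\in\mathcal{S}_k$). One notational point first: what you call $w_i$ is the paper's $e_i=-w_i+q^{-1}w_{i+1}$; the paper's $w_i$ are the pure tensors $v_0^{\otimes(i-1)}\otimes v_1\otimes v_0^{\otimes(n-i)}$, which span the whole weight space (there are $n$ of them) and do not lie in $H_1$. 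Given that you describe support $\{i,i+1\}$ and count $n-1$ of them, you clearly mean the $e_i$, so this is cosmetic.

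The genuine problem is your enumeration of cases. A factor $e_i$ has support meeting $\{t,t+1\}$ exactly when $i\in\{t-1,t,t+1\}$, so the overlap count $c$ ranges over $\{0,1,2,3\}$, not $\{0,1,2\}$: the case $c=3$ occurs whenever $e_{t-1},e_t,e_{t+1}$ all appear, i.e.\ $t$ is interior to a block $E(v_1^{\otimes b_j})$ with $b_j\geq 4$, and there one needs the cross terms of $\sigma_t\cdot e_{t\pm1}=qe_{t\pm1}+e_t$ to die against $\sigma_t\cdot e_t=-q^{-1}e_t$ by antisymmetry. More seriously, your characterization of $c=2$ as arising "only when $i_{j+1}=i_j+1$ and $t\in\{i_j,i_{j+1}\}$" excludes the configuration $\{i_j,i_{j+1}\}=\{t-1,t+1\}$ with $t$ itself absent from the index set. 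By the correspondence of Equation (\ref{eq:psi inverse}) this is exactly the junction of two adjacent $E$-blocks ($t=c_{j+1}=c_j+b_j$ with $a_{j+1}=0$), which is the paper's Case (5) and arguably the hardest: there $\sigma_t\cdot\phi(s)=q\phi(s)+\phi(s')+\phi(s'')$ produces two new basis vectors, matching $(qe_{i_p}+e_{i_p+1})\wedge(e_{i_{p+1}-1}+qe_{i_{p+1}})$ on the wedge side. As written, your plan would simply never reach this configuration, so the case analysis must be corrected before the computation can be carried out.
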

As a result, the action of $B_n$ on $V^{\otimes n}$ is completely determined by its action on the vector space $H_1$. It is not hard to compute the action of the full twist $\tau\in B_n$ on $H_1$ explicitly, which is simply a scalar multiplication by $q^{n(n-3)}$. Hence, if we define 
\[\pi_k : V^{\otimes n} \to V^{\otimes n} \,\, \text{for }k\in \{0,...,n-1\},\] as the projection to the subspace $H_k\oplus F(H_k)$, then we can write 
	\begin{equation*}
	\Phi(\tau) = \sum_{k=0}^{n-1}q^{n(n-1-2k)}\pi_k.
\end{equation*}

The equation holds for any power of $\tau$ by raising the coefficients to the corresponding power. By looking at the first $n$ powers of $\tau$ and inverting the coefficient matrix, we can write $\pi_k$ as a $\mathbb{C}(q)$-linear combinations of $\Phi(\tau^0),...,\Phi(\tau^{n-1})$ for each $k\in \{0,...,n-1\}$. Therefore, for any $m\geq 0$, we can write $\Phi(\tau^m)$ as a $\mathbb{C}(q)$-linear combinations of $\Phi(\tau^0),...,\Phi(\tau^{n-1})$. This leads to the expression of $\Delta(L_m)$ in Proposition \ref{prop:alexpoly} in terms of $\Delta(L_0),...,\Delta(L_{n-1})$.

Here are some directions of future work. 
Firstly, the construction in this paper relies on the assumption that the $n$-parallel strands are oriented in the same direction. It is a natural question to ask what happens if we reverse the direction of some of the strands.

Secondly, in \cite{lambert2018twisting}, Lambert-Cole proved stabilization results for knot Floer homology under adding twists to $2$-parallel strands. We would like to explore similar stabilization results for knot Floer homology under adding twists to $n$-parallel strands.  

Thirdly, in \cite{rozansky2010categorification}, Rozansky used the stabilization results on Khovanov homology under adding twists to define Khovanov homology for links in $S^2\times S^1$. See also \cite{willis2021khovanov}, where Willis extended it to links in $\#^rS^2\times S^1$. We would like to see, if such stabilization results hold for knot Floer homology, what is the relation of the stabilization limit with the knot Floer homology for links in $S^2\times S^1$.

\vspace*{2mm}
\textbf{Organization of the paper}. In Section \ref{sec:qgroup} and \ref{sec:rep}, we briefly review the quantum group $\qgroup$ and its representations. In Section \ref{sec:hk}, we give a basis of $H_k$, and prove $H_k\cong \wedge^kH_1$ as vector spaces. In Section \ref{section:braid}, we prove Proposition \ref{prop:braid}. In Section \ref{section:Alex}, we compute the action of the full twist $\tau$ on $V^{\otimes n}$, and then prove Proposition \ref{prop:alexpoly} and \ref{prop:alexstab}.

\vspace*{2mm}
\textbf{Acknowledgements.} The author wants to thank Ciprian Manolescu and Mike Willis for many helpful discussions.

\section{Representations of $\qgroup$}

In this section, we first give a brief review of representations of $\qgroup$. We follow the exposition in Section $2$ and $3$ in \cite{sartori2015alexander} closely. The relatively new material in this section is that we introduce some $\mathbb{C}(q)$-vector space basis of the tensor product $V^{\otimes n}$ of the vector space representation $V$, which will be important for our analysis of the braid group action later.
\subsection{The quantum enveloping subalgebra $\qgroup$}
\label{sec:qgroup}
Let $\mathfrak{gl}(1|1)$ be the Lie superalgebra of linear endomorphisms of the $2$-dimensional graded  complex vector space $\mathbb{C}^{1|1} = \langle u_0,u_1\rangle$ with grading $|u_0|=0, |u_1|=1$. Let $\mathfrak{h} = \langle h_1,h_2\rangle$ be the Cartan subalegbra of the diagonal matrices. Denote the weight lattice by $P=\mathbb{Z}\epsilon_1\oplus \mathbb{Z}\epsilon_2$, where $\left\{\epsilon_1,\epsilon_2\right\}$ is the basis dual to $\left\{h_1,h_2\right\}$. Denote its dual lattice by $P^* = \mathbb{Z}h_1\oplus \mathbb{Z}h_2$.
 The roots of $\mathfrak{gl}(1|1)$ are $\alpha = \epsilon_1-\epsilon_2$ and $-\alpha$.

The quantum enveloping superalgebra $\qgroup$ is the unital superalgebra over the field $\mathbb{C}(q)$ generated by $E,F,\textbf{q}^h$ with $h \in P^*$, with degrees $|\textbf{q}^h|=0$, $|E|=|F|=1$ and relations 
\begin{equation}
	\begin{split}
		\q^0=1,\quad \q^h\q^{h'} &= q^{h+h'}\quad \text{for } h,h'\in P^*,\\
        \q^hE = q^{\langle h,\alpha \rangle}E\q^h, \quad \q^hF & = q^{\langle h,-\alpha \rangle}F\q^h, \quad \text{for }h\in P,\\
        EF+FE = \frac{K-K^{-1}}{q-q^{-1}}, & \quad \text{where }K = \q^{h_1+h_2}, \\
        E^2&=F^2=0.
	\end{split}
\end{equation}

The quantum enveloping superalgebra $\qgroup$ has a Hopf superalgebra structure, with the comultiplication $\Delta:\qgroup \to \qgroup \otimes \qgroup$, the counit $\textbf{u}:\qgroup \to \mathbb{C}(q)$ and the antipodal map $S:\qgroup \to \qgroup$ defined as follows:
\begin{equation}
	\begin{split}
\Delta(E) = E\otimes K^{-1}+1\otimes E,\quad \Delta(F) &= F\otimes 1 + K \otimes F, \quad \Delta(\q^h) = \q^h\otimes \q^h,\\
\textbf{u}(E) = \textbf{u}(F)=0, &\quad\textbf{u}(\q^h)=1,\\
S(E) = -EK, \quad S(F) =&-K^{-1}F, \quad S(\q^h) = \q^{-h}.
	\end{split}
\label{equation:comul}
\end{equation}

\subsection{Representations of $\qgroup$}
\label{sec:rep}

For a weight $\lambda = c_1\epsilon_1+c_2\epsilon_2\in P$, define the grading of $\lambda$ by \[|\lambda|= c_2\mod 2\,\,.\]

 Irreducible representations $L(\lambda)$ of $\qgroup$ are indexed by the their highest weight $\lambda\in P$, which are divided into two cases, depending on whether or not $\lambda $ belongs to $P' := \left\{\lambda\in P | \langle \lambda,h_1+h_2\rangle\neq 0 \right\}$, which is the complement of $\mathbb{Z}\alpha$ in the weight lattice $P$.
\begin{enumerate}
	\item If $\lambda\notin P'$, i.e. $\langle\lambda,h_1+h_2\rangle=0$, then $L(\lambda) $ is a $1$-dimensional $\mathbb{C}(q)$-vector space spanned by $v^{\lambda}$, with grading $|v^{\lambda}| = |\lambda|$ and actions 
	\begin{equation}
		Ev^{\lambda} = Fv^{\lambda}=0,\quad \q^hv^{\lambda} = q^{\langle h,\lambda\rangle }v^{\lambda}.
	\end{equation} 
\item If $\lambda \in P'$, i.e. $\langle\lambda,h_1+h_2\rangle\neq0$, then $L(\lambda)$ is a $2$-dimensional $\mathbb{C}(q)$-vector space spanned by $\left\{v_0^{\lambda},v_1^{\lambda}\right\}$, with gradings $|v^{\lambda}_0| = |\lambda|, |v^{\lambda}_1| = |\lambda|+1$ and actions 
\begin{equation}
	\label{eq:irred rep}
	\begin{split}
	Ev^{\lambda}_0=0, \quad Fv^{\lambda}_0 = \left[\lambda\right]v^{\lambda}_1, \quad &\q^hv^{\lambda}_0=q^{\langle h,\lambda\rangle }v^{\lambda}_0,\\
Ev^{\lambda}_1=v_0^{\lambda}, \quad Fv^{\lambda}_1 = 0, \quad &\q^hv^{\lambda}_1=q^{\langle h,\lambda-\alpha \rangle }v^{\lambda}_1,\\
	\end{split}
\end{equation}
where $\left[\lambda \right] = \frac{q^{\langle h_1+h_2,\lambda\rangle }-q^{-\langle h_1+h_2,\lambda\rangle }}{q-q^{-1}}.$
\end{enumerate}

The most important representation for us is $L(\epsilon_1)$ and we introduce some special notations for it.
\begin{definition}
	\label{def:vector space rep}
	The \textbf{vector space representation} $V$ of $\qgroup$ is  $L(\epsilon_1)$, spanned by $\left\{v_0,v_1\right\}$ with gradings $|v_0|=0,|v_1|=1$, and actions
	\begin{equation}
			\begin{split}
			Ev_0=0, \quad Fv_0 = v_1, \quad &\q^hv_0=q^{\langle h,\epsilon_1\rangle }v_0,\\
			Ev_1=v_0, \quad Fv_1 = 0, \quad &\q^hv_1=q^{\langle h,\epsilon_2 \rangle }v_1,\\
		\end{split}
	\label{eq:vector rep}
	\end{equation}
In particular, for $K=\q^{h_1+h_2}$, we have $Kv_0 = qv_0$ and $Kv_1=qv_1$.

\end{definition}

The following result describes the decomposition tensor product of certain irreducible representations of $\qgroup$.
\begin{lemma}
	\cite[Lemma 3.3]{sartori2015alexander} Suppose $
	\lambda,\mu,\lambda+\mu \in P'$, then we have 
	\begin{equation}
		L(\lambda) \otimes L(\mu) \cong L(\lambda+\mu)\otimes L(\lambda+\mu-\alpha),
	\end{equation}
where $L(\lambda+\mu)$ is spanned by $\left\{v_0^{\lambda}\otimes v_0^{\mu}, F(v_0^{\lambda}\otimes v_0^{\mu})\right\}$, and $L(\lambda+\mu-\alpha)$ is spanned by $\left\{E(v_1^{\lambda}\otimes v_1^{\mu}), v_1^{\lambda}\otimes v_1^{\mu}\right\}$.
\label{lemma:tensor of reps}
\end{lemma}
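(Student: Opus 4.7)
The plan is to exhibit two $\qgroup$-submodules of $L(\lambda)\otimes L(\mu)$, one isomorphic to $L(\lambda+\mu)$ and one isomorphic to $L(\lambda+\mu-\alpha)$, whose direct sum fills the tensor product. Both sides have dimension $4$, so once the two submodules are produced and shown to intersect trivially, the decomposition follows by a dimension count.

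First I would take the obvious candidate $w := v_0^\lambda\otimes v_0^\mu$ for the highest weight vector of the $L(\lambda+\mu)$-summand. Using $\Delta(E) = E\otimes K^{-1} + 1\otimes E$ from \eqref{equation:comul} together with $Ev_0^\lambda = Ev_0^\mu = 0$, one gets $E(w)=0$; the weight is visibly $\lambda+\mu$. To see that $\{w,\,F(w)\}$ spans a copy of $L(\lambda+\mu)$, I would apply the relation $EF+FE = (K-K^{-1})/(q-q^{-1})$ to $w$ and use $E(w)=0$ to obtain $EF(w) = [\lambda+\mu]\,w$. This is nonzero since $\lambda+\mu\in P'$, and after rescaling $F(w)$ it matches the presentation of $L(\lambda+\mu)$ given in \eqref{eq:irred rep}.

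For the second summand I would take $w' := E(v_1^\lambda\otimes v_1^\mu)$. It is automatically annihilated by $E$, because $E^2=0$ together with $\Delta$ being an algebra map force $\Delta(E)^2 = 0$. Expanding the comultiplication shows $w'$ is a $q$-linear combination of $v_0^\lambda\otimes v_1^\mu$ and $v_1^\lambda\otimes v_0^\mu$, hence of weight $\lambda+\mu-\alpha$. Moreover $F(v_1^\lambda\otimes v_1^\mu)=0$ because $Fv_1^\lambda = Fv_1^\mu = 0$, so applying $EF+FE$ to $v_1^\lambda\otimes v_1^\mu$ gives $F(w') = [\lambda+\mu-\alpha]\,(v_1^\lambda\otimes v_1^\mu)$, which equals $[\lambda+\mu]\,(v_1^\lambda\otimes v_1^\mu)$ since $\langle h_1+h_2,\alpha\rangle = 0$. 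In particular $w'\neq 0$, and the span of $\{w',\,v_1^\lambda\otimes v_1^\mu\}$ is a submodule isomorphic to $L(\lambda+\mu-\alpha)$ via \eqref{eq:irred rep}.

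Finally I would argue directness of the sum. Since the two submodules are irreducible with distinct highest weights $\lambda+\mu$ and $\lambda+\mu-\alpha$, they are non-isomorphic, so any nonzero submodule of their intersection would inject into both and force an isomorphism between them, a contradiction. Hence the intersection is $0$ and the two submodules span all of $L(\lambda)\otimes L(\mu)$. The hypothesis $\lambda+\mu\in P'$ enters at exactly one crucial point, guaranteeing $[\lambda+\mu]\neq 0$ so that neither candidate highest weight vector collapses to a single weight vector; this is what I would flag as the main thing to watch rather than a real obstacle.
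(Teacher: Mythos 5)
Your proof is correct: the paper does not prove this lemma itself (it cites \cite[Lemma 3.3]{sartori2015alexander}), and the remark following the statement computes exactly your vector $w'=E(v_1^{\lambda}\otimes v_1^{\mu})$ via the comultiplication, so your argument is the standard one consistent with what the paper sketches. The only cosmetic point is that the scalar in $F(w')=FE(v_1^{\lambda}\otimes v_1^{\mu})$ is really the bracket evaluated at the weight $\lambda+\mu-2\alpha$ of $v_1^{\lambda}\otimes v_1^{\mu}$, but since $\langle h_1+h_2,\alpha\rangle=0$ this coincides with $[\lambda+\mu-\alpha]=[\lambda+\mu]$ as you note, so nothing is affected.
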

Note that the action of $E$ on $L(\lambda)\otimes L(\mu)$ is defined via the comultiplication $\Delta$, and there is a sign adjustment due to the grading, e.g.
\begin{align*}
	E(v_1^{\lambda}\otimes v_1^{\mu}) &= \Delta(E)( v_1^{\lambda}\otimes v_1^{\mu}) =(E\otimes K^{-1}+1\otimes E)(v_1^{\lambda}\otimes v_1^{\mu})\\
	          &=(-1)^{|K^{-1}||v_1^{\lambda}|}E(v^{\lambda}_1)\otimes K^{-1}(v^{\mu}_1)+ (-1)^{|E||v^{\lambda}_1|}v^{\lambda}_1\otimes E(v^{\mu}_1)\\
	          &=v^{\lambda}_0\otimes q^{\langle\mu-\alpha ,-h_1-h_2\rangle }v^{\mu}_1+ (-1)^{|\lambda|+1}v^{\lambda}_1\otimes v^{\mu}_0.
\end{align*}

Using Lemma \ref{lemma:tensor of reps} repeatedly, we get the following decomposition of tensor products of the vector space representations $V^{\otimes n}$.

\begin{lemma}
	\cite[Theorem 6.4]{benkart2013planar} The tensor product $V^{\otimes n}$ decomposes as 
	\begin{equation}
		V^{\otimes n} \cong \bigoplus^{n-1}_{k=0} {{n-1}\choose{k}}L\big((n-k)\epsilon_1+k\epsilon_2\big)
	\end{equation}
\label{lemma:tensor of vector rep}
\end{lemma}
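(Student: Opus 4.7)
The plan is to prove Lemma \ref{lemma:tensor of vector rep} by induction on $n$, repeatedly invoking Lemma \ref{lemma:tensor of reps} to peel off one tensor factor $V = L(\epsilon_1)$ at a time, then collecting terms via Pascal's identity.

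For the base case $n=1$, we have $V^{\otimes 1} = L(\epsilon_1)$, matching the right-hand side with its single summand $\binom{0}{0} L(\epsilon_1)$. For the inductive step, assume
\[
V^{\otimes n} \cong \bigoplus_{k=0}^{n-1}\binom{n-1}{k} L\bigl((n-k)\epsilon_1+k\epsilon_2\bigr),
\]
and tensor on the right with $V=L(\epsilon_1)$. Before applying Lemma \ref{lemma:tensor of reps} to each summand, I would first verify the hypothesis: for $\lambda=(n-k)\epsilon_1+k\epsilon_2$ and $\mu=\epsilon_1$, the pairings $\langle\lambda,h_1+h_2\rangle=n$, $\langle\mu,h_1+h_2\rangle=1$ and $\langle\lambda+\mu,h_1+h_2\rangle=n+1$ are all nonzero, so $\lambda,\mu,\lambda+\mu\in P'$, as required.

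With the hypothesis checked, Lemma \ref{lemma:tensor of reps} gives
\[
L\bigl((n-k)\epsilon_1+k\epsilon_2\bigr)\otimes L(\epsilon_1) \cong L\bigl((n+1-k)\epsilon_1+k\epsilon_2\bigr)\oplus L\bigl((n-k)\epsilon_1+(k+1)\epsilon_2\bigr),
\]
using $\lambda+\mu-\alpha = (n-k)\epsilon_1+(k+1)\epsilon_2$. Summing over $k$ and relabeling via $j=k$ in the first piece and $j=k+1$ in the second piece, the multiplicity of $L\bigl((n+1-j)\epsilon_1+j\epsilon_2\bigr)$ in $V^{\otimes(n+1)}$ becomes $\binom{n-1}{j}+\binom{n-1}{j-1}=\binom{n}{j}$ by Pascal's identity (with the convention $\binom{n-1}{-1}=\binom{n-1}{n}=0$ handling the endpoints $j=0$ and $j=n$). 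This yields exactly
\[
V^{\otimes(n+1)} \cong \bigoplus_{j=0}^{n}\binom{n}{j} L\bigl((n+1-j)\epsilon_1+j\epsilon_2\bigr),
\]
completing the induction.

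There is no real obstacle here; the argument is a routine induction built on Lemma \ref{lemma:tensor of reps}. The only points requiring care are the verification that each weight encountered stays in $P'$ (so that the two-dimensional case of Lemma \ref{lemma:tensor of reps} applies rather than the degenerate one-dimensional case), and the reindexing needed to put the multiplicities in the form $\binom{n-1}{j}+\binom{n-1}{j-1}$ before applying Pascal's identity.
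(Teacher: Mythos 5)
Your proof is correct and follows exactly the route the paper indicates: it states that the decomposition is obtained ``using Lemma \ref{lemma:tensor of reps} repeatedly'' (and otherwise defers to the cited reference), which is precisely your induction with the $P'$ hypothesis check and Pascal's identity. No issues.
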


\subsection{A Basis of $V^{\otimes n}$}

\label{sec:hk}
In the rest of this section, we will give an explicit choice of highest weight vectors in $V^{\otimes n}$ with weights $(n-k)\epsilon_1+k\epsilon_2$. See Proposition \ref{prop:basis} and \ref{prop:wedge product}. This construction is partly motivated by the `canonical basis' in \cite{zhang2002quantum}, and tensor products of the regular representation of the symmetric group. Similar constructions appeared in \cite{manion2018decategorification} as well. In Section \ref{section:braid}, we will see that this choice of basis gives a natural description of the action of the braid group $B_n$ on $V^{\otimes n}$.

Fix some $n$ for the rest of the section. We introduce some notations first. 
\begin{definition}
	\label{def:hk}
	For $k \in \{ 0,1,...,n-1\}$, let $H_k$ denote the space of highest weight vectors in $V^{\otimes n}$ with weight $(n-k)\epsilon_1 + k\epsilon_2$,  i.e.,
	\begin{equation}
		H_k = \left\{v\in V^{\otimes n} \mid E(v)=0, \,\, \q^h v= q^{\langle h,(n-k)\epsilon_1 + k\epsilon_2\rangle}v\right\}.
	\end{equation}
Denote the space of highest weight vectors of $V^{\otimes n}$ by \[H := \bigoplus_{k=0}^{n-1}H_k.\]
\end{definition}
Note that if $\left\{v_i\right\}_{i\in\mathcal{I}}$ is a basis of highest weight vectors $H$, then $\left\{v_i\right\}_{i\in\mathcal{I}}\cup \left\{F(v_i)\right\}_{i\in\mathcal{I}}$ is a basis of $V^{\otimes n}$, by the description of the irreducible representations of $\qgroup$ in Equation $\ref{eq:irred rep}$ and the direct sum decomposition in Lemma \ref{lemma:tensor of vector rep}. Therefore, we will focus on finding a basis of $H_k$ for each $k \in \{ 0,1,...,n-1\}$.

It follows directly from Lemma \ref{lemma:tensor of vector rep} that dim $H_k= {{n-1}\choose {k}}$. In particular, $H_0$ is $1$-dimensional, which is spanned by $v_0^{\otimes n}$. For other values of $k$, we introduce some other notations before giving a basis.

\begin{definition}
	\label{def:strings}
	For $k\in \left\{ 0,1,...,n-1\right\}$, let $\mathcal{S}_k$ be the set of strings $s=(a_1,b_1,a_2,b_2,...,b_l,a_{l+1})$ which satisfies the following conditions:
	\begin{equation}
		\mathcal{S}_k = \left\{s=(a_1,b_1,a_2,b_2,...,b_l,a_{l+1}) \bigg| \,\,a_i,b_i \in \mathbb{Z}_{\geq0}, b_i\geq 2, \sum_{i=1}^l (b_i-1) =k, \sum_{i=1}^{l+1} a_i + \sum_{j=1}^l b_j =n\right\}
	\end{equation}

We define the function $\phi:\mathcal{S}_k \to V^{\otimes n}$ by the following expression:
\begin{equation}
	\phi((a_1,b_1,a_2,b_2,...,b_l,a_{l+1})) = v_0^{\otimes a_1}\otimes E(v_1^{\otimes b_1})\otimes v_0^{\otimes a_2}\otimes E(v_1^{\otimes b_2}) \otimes ...\otimes E(v_1^{\otimes b_l})\otimes v_0^{\otimes a_{l+1}},
\end{equation}
where if $a_j=0$ for some $j$, then we drop the part $v_0^{\otimes a_j}$ in the expression.
\end{definition}

\begin{remark}
	Note that $v_0 = E(v_1)$, so a string $s\in \mathcal{S}_k$ could be viewed as a recipe to apply $E$'s to different components of $v_1^{\otimes n}$, where $b_i$ means applying $E$ to $v_1^{\otimes b_i}$,  and $a_i$ means applying $a_i$-many $E$'s, each to a single $v_1$ to get $(E(v_1))^{\otimes a_i} = v_0^{\otimes a_i}.$
\end{remark}
\begin{example}
	When $k=0$, the only possible string in $\mathcal{S}_0$ is the 1-digit string $s=(n)$, for which the corresponding vector $\phi(s) = v_0^{\otimes n}$.
	
	When $k=1$, possible strings in $\mathcal{S}_1$ are those such that $l=1$ and $b_1=2$. Thus, we have 
	\begin{equation}
		\mathcal{S}_1 = \left\{s_{i} = (i,2,n-2-i) \mid i = 0,...,n-2\right\}
	\end{equation}
	Denote $ e_i :=\phi(s_{i-1}) = v_0^{\otimes i-1}\otimes E(v_1\otimes v_1)\otimes v_0^{\otimes (n-1-i)}$ for $i=1,...,n-1$. More explicitly, 
	\begin{equation}
		e_i =  v_0^{\otimes i-1}\otimes ( q^{-1}v_0\otimes v_1 -v_1\otimes v_0)\otimes v_0^{\otimes (n-1-i)}.
	\end{equation}
It is easy to see from the explicit formulae that $\left\{e_i\mid i=1,...,n-1\right\}$ is a linearly independent set of size $n-1$. With the help of Lemma \ref{lemma:in Hk}, we get that $e_i\in H_1$, so $\left\{e_i\mid i=1,...,n-1\right\}$ is a basis of $H_1$.
\label{example:basis}
\end{example}
Now we are ready to state the main result of this section.

\begin{proposition}
	\label{prop:basis}
	For each $k\in\{1,...,n-1\}$, $\phi(\mathcal{S}_k)$ gives a basis of $H_k$. 
\end{proposition}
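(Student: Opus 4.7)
The plan is to prove Proposition \ref{prop:basis} in three steps: verify $|\mathcal{S}_k| = \binom{n-1}{k}$, check $\phi(s) \in H_k$ for each $s$, and establish linear independence of $\phi(\mathcal{S}_k)$ by a triangular leading-term argument. Since $\dim H_k = \binom{n-1}{k}$ by Lemma \ref{lemma:tensor of vector rep}, the cardinality and linear independence together force $\phi(\mathcal{S}_k)$ to be a basis.

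For cardinality and membership, I would associate to each $s = (a_1, b_1, \ldots, b_l, a_{l+1}) \in \mathcal{S}_k$ the subset
\begin{equation*}
L(s) = \bigsqcup_{i=1}^{l} \{s_i + 2, \, s_i + 3, \, \ldots, \, s_i + b_i\} \subseteq \{2, \ldots, n\}, \qquad s_i := a_1 + b_1 + \cdots + a_i.
\end{equation*}
Given a $k$-subset of $\{2, \ldots, n\}$, partition it into maximal runs: run lengths recover $b_i - 1$, and the internal, prefix, and suffix gaps recover the $a_i$'s. The constraints $b_i \geq 2$ and $a_i \geq 0$ translate exactly to the subset being a $k$-subset of $\{2, \ldots, n\}$, giving $|\mathcal{S}_k| = \binom{n-1}{k}$. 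For $\phi(s) \in H_k$, the weight is additive across tensor factors: each $E(v_1^{\otimes b_i})$ has weight $(b_i - 1)\epsilon_2 + \epsilon_1$ and each $v_0^{\otimes a_i}$ contributes $a_i \epsilon_1$, summing to $(n-k)\epsilon_1 + k\epsilon_2$. For $E\phi(s) = 0$, one has $Ev_0 = 0$ and $E \cdot E(v_1^{\otimes b_i}) = 0$, the latter because $E^2 = 0$ in $\qgroup$ and $\Delta$ is a super-algebra homomorphism; the coproduct $\Delta(E) = E \otimes K^{-1} + 1 \otimes E$ then propagates annihilation across the full tensor product.

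The key step is linear independence. A coproduct induction on $b$ gives
\begin{equation*}
E(v_1^{\otimes b}) = \sum_{j=1}^{b} (-1)^{j-1} q^{-(b-j)} \, v_1^{\otimes (j-1)} \otimes v_0 \otimes v_1^{\otimes (b-j)},
\end{equation*}
so expanding $\phi(s)$ in the standard $v_0/v_1$-monomial basis of $V^{\otimes n}$ writes it as a signed $q$-weighted sum over choices of one $v_0$-position per block, with $v_0$'s forced at every $a_i$-position. Order the $k$-subsets of $\{1, \ldots, n\}$ by standard lex on their sorted elements. I claim the lex-maximum $v_1$-position set appearing in $\phi(s)$ is exactly $L(s)$, attained uniquely by placing the $v_0$ at the leftmost slot of every block (any other choice strictly lowers the smallest $v_1$-position of some block); its coefficient is $\prod_i q^{-(b_i - 1)} = q^{-k}$. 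Form the $\binom{n-1}{k} \times \binom{n-1}{k}$ matrix whose rows are the expansions of the $\phi(s)$'s and whose columns are the $k$-subsets $T \subseteq \{2, \ldots, n\}$, both indexed by the lex order of $L(s)$ and $T$ respectively. Because $L(s)$ is the lex-max $v_1$-pattern in $\phi(s)$, the $(s, T)$-entry vanishes whenever $T > L(s)$; since $s \mapsto L(s)$ bijects $\mathcal{S}_k$ with the column set, the matrix is lower triangular with constant nonzero diagonal $q^{-k}$, hence invertible. This yields linear independence.

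The main obstacle I anticipate is choosing the right leading-term convention. A naive "leftmost $v_1$" choice fails because the same monomial often appears in several distinct $\phi(s)$'s, spoiling triangularity. Using the lex-\emph{maximum} $v_1$-pattern of $\phi(s)$ resolves this: its very maximality inside $\phi(s)$ automatically forces every strictly larger column to contribute zero in that row, delivering the above-diagonal vanishing for free.
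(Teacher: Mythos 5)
Your proposal is correct, and it proves linear independence by a genuinely different route than the paper. The membership check ($\phi(s)\in H_k$ via weight additivity and $E^2=0$ propagated through the iterated coproduct) coincides with the paper's Lemma \ref{lemma:in Hk}, and your run/gap bijection between $\mathcal{S}_k$ and $k$-subsets of $\{2,\dots,n\}$ is the same combinatorics as the paper's Equation \eqref{eq:psi inverse}. But where the paper establishes independence by building the exterior-algebra isomorphism $\psi\colon \wedge^{*}U\to V^{\otimes n}$, identifying $\phi(\mathcal{S}_k)$ with the image of the standard basis $\{e_{i_1}\wedge\cdots\wedge e_{i_k}\}$ under $\psi_k$ (Proposition \ref{prop:wedge product}), and invoking injectivity of $\psi$, you expand each $\phi(s)$ directly in the $v_0/v_1$ monomial basis and run a triangularity argument: the lex-maximal $v_1$-pattern of $\phi(s)$ is $L(s)$, attained uniquely with coefficient $q^{-k}$, and since $s\mapsto L(s)$ is a bijection onto the chosen column set, the square submatrix is triangular with nonzero diagonal. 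I checked the key point: because the blocks occupy disjoint, increasing intervals and each block contributes exactly $b_i-1$ elements regardless of where its $v_0$ sits, the sorted sequence of any $v_1$-pattern decomposes blockwise in fixed positions, so moving any $v_0$ off the leftmost slot strictly lowers the lex order; hence the above-diagonal vanishing is genuine. Your argument is more elementary and self-contained as a proof of Proposition \ref{prop:basis} alone; what it does not deliver is the isomorphism $\wedge^k H_1\cong H_k$ carrying basis to basis, which the paper needs anyway for Proposition \ref{prop:braid} and the full-twist computation, so the paper's detour through $\psi_k$ is not wasted effort in context.
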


To prove this proposition, we will first prove the statement for $k=1$. For other values of $k$, we will prove the following statement.

\begin{proposition}
	\label{prop:wedge product}
  For each $k\in\{1,...,n-1\}$, there is an isomorphism of $\mathbb{C}(q)$-vector space\[ \psi_k: \wedge^k H_1 \to H_k,\]
such that $\psi_k$ sends a standard basis of $\wedge^kH_1$ to $\phi(\mathcal{S}_k)$:
\[\left\{\psi_k (e_{i_1}\wedge e_{i_2}\wedge ...\wedge e_{i_k}) \mid 1\leq i_1<i_2<...<i_k\leq n-1\right\}= \phi(\mathcal{S}_k),\] 
where $\left\{e_i\right\}_{1\leq i\leq n-1}$ is the basis of $H_1$ defined as in Example \ref{example:basis}.
\end{proposition}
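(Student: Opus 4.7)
The plan is to define $\psi_k$ explicitly on the standard wedge basis of $\wedge^k H_1$ via a combinatorial bijection onto $\mathcal{S}_k$, and then deduce the isomorphism from Proposition \ref{prop:basis}. The key combinatorial input is a bijection $\Psi$ between the set of $k$-element subsets $\{1 \le i_1 < i_2 < \cdots < i_k \le n-1\}$ and $\mathcal{S}_k$. Given such a subset, partition it into its maximal runs of consecutive integers $R_1 < R_2 < \cdots < R_l$, where $R_j = \{p_j, p_j+1, \ldots, p_j + c_j - 1\}$, and set
\[
\Psi(\{i_1,\ldots,i_k\}) = (a_1, b_1, a_2, b_2, \ldots, b_l, a_{l+1}),
\]
with $b_j = c_j + 1$, $a_1 = p_1 - 1$, $a_{l+1} = n - p_l - c_l$, and $a_j = p_j - p_{j-1} - c_{j-1} - 1$ for $2 \le j \le l$. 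Maximality of the runs forces $a_j \ge 0$ (an interior $a_j = 0$ is allowed and corresponds to two runs separated by exactly one missing index), and $b_j \ge 2$ is automatic; the conditions $\sum_j(b_j-1) = \sum_j c_j = k$ and $\sum_j a_j + \sum_j b_j = n$ follow by direct summation, so $\Psi$ indeed lands in $\mathcal{S}_k$. The inverse reads off the block-starting positions $p_j$ and sizes $b_j$ from a string and returns $\bigcup_j\{p_j, p_j+1, \ldots, p_j + b_j - 2\}$.

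With $\Psi$ in hand, define
\[
\psi_k(e_{i_1}\wedge e_{i_2}\wedge\cdots\wedge e_{i_k}) = \phi\bigl(\Psi(\{i_1,\ldots,i_k\})\bigr) \quad \text{for } 1 \le i_1 < i_2 < \cdots < i_k \le n-1,
\]
on the standard basis of $\wedge^k H_1$ and extend $\mathbb{C}(q)$-linearly. Since $\Psi$ is a bijection of the indexing sets, and by Proposition \ref{prop:basis} $\phi(\mathcal{S}_k)$ is a basis of $H_k$, this $\psi_k$ carries a basis bijectively to a basis and is therefore a $\mathbb{C}(q)$-linear isomorphism. The set equality $\{\psi_k(e_{i_1}\wedge\cdots\wedge e_{i_k})\} = \phi(\mathcal{S}_k)$ asserted in the proposition is then built into the definition.

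The only real work is the combinatorial verification that $\Psi$ is well-defined and bijective. As a dimension check, grouping strings in $\mathcal{S}_k$ by the number $l$ of blocks gives $|\mathcal{S}_k| = \sum_l \binom{k-1}{l-1}\binom{n-k}{l} = \binom{n-1}{k}$ by Vandermonde, matching $\dim \wedge^k H_1$. Once this bookkeeping is in place, the proposition reduces to a basis relabelling and the substance lies entirely in Proposition \ref{prop:basis}, which supplies both linear independence of $\phi(\mathcal{S}_k)$ and its spanning of $H_k$.
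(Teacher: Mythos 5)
Your combinatorial bijection $\Psi$ between $k$-element index sets and strings in $\mathcal{S}_k$ is exactly the one the paper uses (its Equation \ref{eq:psi inverse}), and your cardinality count $|\mathcal{S}_k|=\binom{n-1}{k}$ is correct. But the proof as written is circular relative to the paper's logical structure, and the circularity hides a genuine gap. You define $\psi_k$ by fiat on the standard wedge basis and then invoke Proposition \ref{prop:basis} to say that the target set $\phi(\mathcal{S}_k)$ is a basis of $H_k$. In the paper, however, Proposition \ref{prop:basis} is only proved directly for $k=1$; for $k>1$ it is deduced \emph{from} Proposition \ref{prop:wedge product}. So the statement you are leaning on --- that $\phi(\mathcal{S}_k)$ is linearly independent and spans $H_k$ --- is precisely the content that this proposition is supposed to deliver. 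What is actually available at this point is only Lemma \ref{lemma:in Hk} ($\phi(\mathcal{S}_k)\subset H_k$) and the dimension count $\dim H_k=\binom{n-1}{k}$; linear independence of the vectors $\phi(s)$, which are nontrivial $\mathbb{C}(q)$-linear combinations of tensor monomials, is not free and is nowhere established in your argument.

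The paper closes this gap by \emph{not} defining $\psi_k$ abstractly: it constructs a componentwise (signed) multiplication $m$ on $V^{\otimes n}$, obtains from it an injective map $\psi:\wedge^* U\to V^{\otimes n}$ on the exterior algebra of the span of the $w_i=v_0^{\otimes(i-1)}\otimes v_1\otimes v_0^{\otimes(n-i)}$, sets $\psi_k=\psi|_{\wedge^k H_1}$, and then \emph{computes} (Equation \ref{eq:wedge product}) that $\psi(e_{i_1}\wedge\cdots\wedge e_{i_k})=\phi(\Psi(\{i_1,\ldots,i_k\}))$ on the nose. Injectivity of $\psi$ then yields linear independence of $\phi(\mathcal{S}_k)$, and the dimension count finishes the isomorphism; Proposition \ref{prop:basis} for $k>1$ falls out as a corollary. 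To repair your proof you would either need to adopt this construction (or some other independent proof of linear independence, e.g.\ a leading-monomial argument), rather than citing Proposition \ref{prop:basis}. A secondary issue: the $\psi_k$ used later in Proposition \ref{prop:wedge product action} must be this specific restriction of $\psi$, with the exact coefficients produced by the computation in Equation \ref{eq:wedge product}; an isomorphism defined merely by matching bases would satisfy the letter of the present statement but would not support the braid-equivariance argument that follows.
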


We begin the proof of Proposition \ref{prop:basis} by showing $\phi(\mathcal{S}_k)$ lies in $H_k$.
\begin{lemma}
	\label{lemma:in Hk}
	For each $k\in\{1,...,n-1\}$ and each $s\in \mathcal{S}_k$, we have $\phi(s)\in H_k$.
\end{lemma}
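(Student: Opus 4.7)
The plan is to verify directly the two defining conditions of $H_k$: that $\phi(s)$ has the correct weight $(n-k)\epsilon_1+k\epsilon_2$ under $\q^h$, and that $\phi(s)$ is annihilated by $E$.

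For the weight calculation, I would use that $v_0$ has weight $\epsilon_1$ and $v_1$ has weight $\epsilon_2$. Since $v_1^{\otimes b}$ has weight $b\epsilon_2$ and $E$ raises the weight by $\alpha=\epsilon_1-\epsilon_2$, each block $E(v_1^{\otimes b_i})$ in $\phi(s)$ has weight $\epsilon_1+(b_i-1)\epsilon_2$. Summing the contributions of all blocks of $\phi(s)$ gives total weight
\[
\Bigl(\sum_j a_j + l\Bigr)\epsilon_1 + \Bigl(\sum_i (b_i-1)\Bigr)\epsilon_2.
\]
Using the constraints $\sum_i(b_i-1)=k$ and $\sum_j a_j + \sum_i b_i = n$ from Definition \ref{def:strings}, this simplifies to $(n-k)\epsilon_1+k\epsilon_2$.

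For the annihilation $E\phi(s)=0$, the key idea is to decompose $\phi(s)$ into blocks, each of which is separately killed by $E$. The blocks are either individual vectors $v_0\in V$, which are killed by $E$ trivially, or vectors $E(v_1^{\otimes b_i})\in V^{\otimes b_i}$. For a block of the latter type, I would use that $\Delta$ is an algebra homomorphism: the iterated coproduct $\Delta^{(b_i-1)}(E^2)$ is the composition of two copies of $\Delta^{(b_i-1)}(E)$, so since $E^2=0$ in $\qgroup$, applying $E$ twice on $V^{\otimes b_i}$ also gives $0$. In particular $E\bigl(E(v_1^{\otimes b_i})\bigr)=0$.

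Finally I need to pass from ``each block is killed by $E$'' to ``the whole tensor product is killed by $E$''. Applying the relation $\Delta(E)=E\otimes K^{-1}+1\otimes E$ iteratively expands $E(y_1\otimes\cdots\otimes y_m)$ as a sum in which the $i$-th term has $E(y_i)$ in the $i$-th slot, with the earlier factors unchanged (up to a Koszul sign) and the later factors modified by $K^{-1}$. If $E(y_i)=0$ for every $i$, every such term vanishes. Combining this with the weight computation yields $\phi(s)\in H_k$. The only real subtlety is bookkeeping of Koszul signs and the $K^{-1}$ factors in the iterated Leibniz expansion, but none of that matters once each block is known to lie in the kernel of $E$. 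This lemma is really the easy half of Proposition \ref{prop:basis}; the substantive work will be verifying linear independence and dimension count to conclude that $\phi(\mathcal{S}_k)$ actually spans $H_k$.
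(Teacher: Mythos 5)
Your proposal is correct and follows essentially the same route as the paper: the weight is checked by counting $\epsilon_1$- and $\epsilon_2$-contributions using the constraints on $\mathcal{S}_k$, and $E\phi(s)=0$ is obtained from the iterated Leibniz expansion of $\Delta(E)$, with the terms hitting $v_0$ vanishing trivially and the terms hitting a block $E(v_1^{\otimes b_i})$ reassembling into $E^2=0$. Your use of coassociativity to treat each block as a single factor is just a cleaner packaging of the paper's explicit sum over all $n$ tensor slots with the bookkeeping constants $c_j$, $d_j$.
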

\begin{proof}
	It is enough to check each $\phi(s)$ is a highest weight vector with the correct weight.
	
	First, we give a more explicit formula for $\phi(s)$. By repeated application of $\Delta(E)$ in Equation (\ref{equation:comul}), one gets \begin{equation}
		\Delta^m(E) = \sum_{i=0}^{m}id^{\otimes (m-i)} \otimes E \otimes (K^{-1})^{\otimes i},
	\end{equation}
and \begin{equation}
	E(v_1^{\otimes m})= \Delta^{m-1}(E)(v_1^{\otimes m}) = \sum_{i=0}^{m-1}(-1)^{i}q^{-(m-1-i)}v_1^{\otimes i} \otimes v_0 \otimes v_1^{\otimes( m-1-i)}.
\end{equation}
Therefore, for each $s=(a_1,b_1,a_2,b_2,...,b_l,a_{l+1}) \in \mathcal{S}_k$ as defined in Definition \ref{def:strings}, \[\phi(s)=v_0^{\otimes a_1}\otimes E(v_1^{\otimes b_1})\otimes v_0^{\otimes a_2}\otimes E(v_1^{\otimes b_2}) \otimes ...\otimes E(v_1^{\otimes b_l})\otimes v_0^{\otimes a_{l+1}}\] is a $\mathbb{C}(q)$-linear combination of terms in $V^{\otimes n}$ such that in each term, $k$ components of the tensor product are $v_1$ and $n-k$ components of the tensor product are $v_0$, as we require \[\sum_{i=1}^l (b_i-1) =k\] for $s\in \mathcal{S}_k$. Since $v_0$ has weight $\epsilon_1$ and $v_1$ has weight $\epsilon_2$, the weight of $\phi(s)$ is $(n-k)\epsilon_1+k\epsilon_2$ as required.

Now we show $E(\phi(s))=0$ for any $s\in \mathcal{S}_k$. Choose a string $s = (a_1,b_1,a_2,b_2,...,b_l,a_{l+1}) \in \mathcal{S}_k$. 

For $2\leq j\leq l-1$, let
\begin{equation}
	\label{eq:cj}
c_j = a_1+\sum_{i=1}^{j-1}(b_i+a_{i+1}), \quad d_j = \sum_{i=1}^{j-1}(b_i-1)
\end{equation}
 and let $c_1=a_1$, i.e. $c_j$ is the sum of entries in $s$ before $b_j$.

Applying $E$ to $\phi(s)$, we get
\begin{equation*}
	\begin{split}
		    E(\phi(s)) &= \Delta^{n-1}(E)(\phi(s))\\
		    &= \sum_{i=0}^{n-1}\big(1^{\otimes (n-1-i)} \otimes E \otimes (K^{-1})^{\otimes i}\big)(v_0^{\otimes a_1}\otimes E(v_1^{\otimes b_1})\otimes v_0^{\otimes a_2}\otimes E(v_1^{\otimes b_2}) \otimes ...\otimes E(v_1^{\otimes b_l})\otimes v_0^{\otimes a_{l+1}})\\
		    &=\sum_{j=1}^{l}\sum_{i=c_j}^{c_j+b_j-1}\big(1^{\otimes i} \otimes E \otimes (K^{-1})^{\otimes (n-1-i)}\big)(v_0^{\otimes a_1}\otimes E(v_1^{\otimes b_1})\otimes v_0^{\otimes a_2}\otimes E(v_1^{\otimes b_2}) \otimes ...\otimes E(v_1^{\otimes b_l})\otimes v_0^{\otimes a_{l+1}})\\
		    &=\sum_{j=1}^{l}(-1)^{d_j}q^{-(n-c_j-b_j)}v_0^{\otimes a_1}\otimes E(v_1^{\otimes b_1})\otimes v_0^{\otimes a_2}\otimes E(v_1^{\otimes b_2}) \otimes ...\otimes E(E(v_1^{\otimes b_j})) \otimes ...\otimes E(v_1^{\otimes b_l})\otimes v_0^{\otimes a_{l+1}}\\
		    &=0,
	\end{split}
\end{equation*}
where the first two equalities are definitions. The third equality follows from $E(v_0)=0$, so we can ignore those terms where $E$ is applied to $v_0$. The fourth equality follows from $K^{-1}v_0 = q^{-1}v_0$ and $K^{-1}v_1=q^{-1}v_1$. The last equality follows from $E^2=0$.
\end{proof}

\begin{proof}[Proof of Proposition \ref{prop:basis} when $k=1$] It follows from the discussion in Example \ref{example:basis} and Lemma \ref{lemma:in Hk}. We see that $\phi(\mathcal{S}_1)$ is contained in $H_1$ by Lemma \ref{lemma:in Hk}, and $\phi(\mathcal{S}_1)$ is a linearly independent set of the right size in Example \ref{example:basis}, hence $\phi(\mathcal{S}_1)$ is a basis of $H_1$.
\end{proof}

Now we turn to the proof of Proposition $\ref{prop:wedge product}$, from which the general case of Proposition \ref{prop:basis} will follow.

\begin{proof}[Proof of Proposition \ref{prop:wedge product}] On the vector space $V$, we define a product $\cdot:V\otimes V\to V$ by:
	\begin{equation}
		v_0\cdot v_0=v_0, \quad v_0\cdot v_1 = v_1\cdot v_0= v_1,\quad v_1\cdot v_1=0.
	\end{equation}
    We extend this to a graded componentwise product on $V^{\otimes n}$ by the obvious formula:
    \begin{equation}
    	\begin{split}
    		m:V^{\otimes n}\otimes V^{\otimes n}&\to V^{\otimes n}\\
    		 (v_{\alpha_1}\otimes v_{\alpha_2}\otimes ...\otimes v_{\alpha_n}) \otimes (v_{\beta_1}\otimes v_{\beta_2}\otimes ...\otimes v_{\beta_n})&\to (-1)^{\clubsuit}v_{\alpha_1}\cdot v_{\beta_1} \otimes v_{\alpha_2}\cdot v_{\beta_2}\otimes ...\otimes v_{\alpha_n}\cdot v_{\beta_n},
    	\end{split}
    \end{equation} 
where \[\clubsuit = \sum_{i=1}^{n}\sum_{j=i+1}^{n}\beta_i\alpha_j.\]

Define $w_i = v_0^{\otimes (i-1)}\otimes v_1\otimes v_0^{\otimes (n-i)}$ for $1\leq i\leq n$. Let $U$ be the $\mathbb{C}(q)$-vector space spanned by $\left\{w_i \mid 1\leq i\leq n\right\}$. Then for each pair $(i,j)$ with $i<j$, we have \[m(w_i\otimes w_j) = v_0^{\otimes (i-1)}\otimes v_1\otimes v_0^{\otimes (j-i-1)}\otimes v_1\otimes v_0^{\otimes (n-j)}= - m(w_j\otimes w_i),\]
and for each $i$, \[m(w_i\otimes w_i)=0.\]
Therefore, $m$ defines an antisymmetric product on $U$, and we can define an isomorphism 
\begin{equation}
	 \psi:\wedge^*U \to V^{\otimes n},
	 \label{eq:def of psi_1}
\end{equation}
 such that for $1\leq i_1<i_2<...<i_k\leq n$, 
\[\psi(w_{i_1}\wedge w_{i_2}\wedge ...\wedge w_{i_k}) = m^{i_k}(w_{i_1}\otimes w_{i_2}\otimes ...\otimes w_{i_k}) =v_{\alpha_1}\otimes v_{\alpha_2}\otimes ...\otimes v_{\alpha_n},\]
where \begin{equation*}
	v_{\alpha_j} = \begin{cases}
		v_1,  \quad &\text{if } j\in \left\{i_1,i_2,...,i_k\right\},  \\
		v_0,    \quad &\text{otherwise.}       \\
	\end{cases}
\end{equation*}

Define \begin{equation}
	\psi_k: \wedge^k H_1 \to V^{\otimes n}
	\label{eq:def of psi_2}
\end{equation} as the restriction of $\psi$ to $\wedge^k H_1$. 

Recall from Example \ref{example:basis}, we have 
\[e_i =  v_0^{\otimes i-1}\otimes ( q^{-1}v_0\otimes v_1 -v_1\otimes v_0)\otimes v_0^{\otimes (n-1-i)} = -w_i+q^{-1}w_{i+1},\]
and we have shown that $\phi(\mathcal{S}_1)=\left\{e_1,e_2,...,e_{n-1}\right\}$ gives a basis of $H_1$.

We are going to prove that
\begin{equation*}
	 \left\{\psi_k(e_{i_1}\wedge e_{i_2}\wedge ...\wedge e_{i_k}) \mid 1\leq i_1<i_2<...<i_k\leq n-1\right\}= \phi(\mathcal{S}_k).\tag{$\dagger$}\label{eq:basis}
\end{equation*}

Suppose we have shown (\ref{eq:basis}), then as $\phi(\mathcal{S}_k)\subset H_k$, we get $\text{Im}(\psi_k)\subset H_k$. Since $\ker(\psi_k)\subset \ker(\psi)=0$, and  $\dim\wedge^k H_1=\dim H_k = {{n-1}\choose {k}} $, it follows that $\psi_k:\wedge^kH_1\to H_k$ is an isomorphism. 

It is left to prove Equation (\ref{eq:basis}), which follows from explicit calculations. Note that for a consecutive sequence of indices $i+1<i+2<...<i+m-1$ of length $m-1$, we have \begin{equation}
	\label{eq:wedge product}
	\begin{split}
		&\psi(e_{i+1}\wedge e_{i+2}\wedge...\wedge e_{i+m-1}) \\
		=& \psi \Big((-w_{i+1}+q^{-1}w_{i+2})\wedge (-w_{i+2}+q^{-1}w_{i+3})\wedge ...\wedge (-w_{i+m-1}+q^{-1}w_{i+m})\Big) \\
	    =& \sum_{j=0}^{m-1}(-1)^jq^{-(m-1-j)}\psi \left(w_{i+1}\wedge w_{i+2}\wedge ...\wedge \widehat{w_{i+1+j}}\wedge ...\wedge w_{i+m}\right)\\
	    =&\sum_{j=0}^{m-1}(-1)^jq^{-(m-1-j)}v_0^{\otimes i}\otimes v_1^{\otimes j}\otimes v_0\otimes v_1^{\otimes( m-1-j)}\otimes v_0^{\otimes(n-i-m) }\\
	    =&v_0^{\otimes i}\otimes E(v_1^{\otimes m})\otimes v_0^{\otimes (n-i-m)} = \phi((i,m,n-i-m)),\\
	\end{split}
\end{equation} 
where $\widehat{w_{i+1+j}}$ means this term is missed in the wedge product.

This shows a consecutive sequence of indices $i+1<i+2<...<i+m-1$ of length $m-1$ in the wedge product $e_{i+1}\wedge e_{i+2}\wedge...\wedge e_{i+m-1}$ corresponds to applying  $E$ to $v_1^{\otimes m}$ under the map $\psi$. On the other hand, if the index $i_{j+1}$ is larger than $i_j+1$ in the wedge product $e_{i_1}\wedge e_{i_2}\wedge ...\wedge e_{i_k}$, then they don't interact with each other when we express them in terms of $w_i$, and we can treat them separately when applying $\psi$.

Now given any string $s=(a_1,b_1,...,b_l,a_{l+1})\in \mathcal{S}_k$, we can get a corresponding wedge product $w_s=e_{i_1}\wedge e_{i_2}\wedge ...\wedge e_{i_k}$ by requiring $i_1<i_2<...<i_k$, and  
\begin{equation}
	\label{eq:psi inverse}
	 \{i_1,i_2,...,i_k\} = \bigcup_{j=1}^{l}\left\{c_j+1,c_j+2,...,c_j+b_j-1\right\},
\end{equation} 
where $c_j$ is defined in Equation (\ref{eq:cj}). In words, it says each $b_j$ in $s$ corresponds to a consecutive sequence $c_j+1<c_j+2<...<c_j+b_j-2$ of length $b_j-1$ in the index set of the wedge product $w_s=e_{i_1}\wedge e_{i_2}\wedge ...\wedge e_{i_k}$. 

It is easy to see this gives a bijection between $\mathcal{S}_k$ and the set of elements \[ \left\{e_{i_1}\wedge e_{i_2}\wedge ...\wedge e_{i_k} \mid 1\leq i_1<i_2<...<i_k\leq n-1\right\}.\] For example, one can construct an inverse map, by grouping the indices $ i_1<i_2<...<i_k$ into blocks of consecutive ones. The $j$th block of length $m$ corresponds to $b_j=m+1$, and $a_j$ is given by the difference between $(j-1)$th pair of non-consecutive indices. Applying the calculation in Equation (\ref{eq:wedge product}) to each consecutive sequence of length $b_j-1$ in the index set of $w_s$ shows that \[\phi(s)=\psi(w_s), \quad \text{for any }s\in \mathcal{S}_k.\]
Hence, we have the Equation \ref{eq:basis}, and finish the proof Proposition \ref{prop:wedge product}.
\end{proof}
\begin{proof}[Proof of Proposition \ref{prop:basis}, for general $k>1$.] It follows from Proposition \ref{prop:wedge product}. As $\phi(\mathcal{S}_k)$ is the image of a basis under an isomorphism of vector spaces, $\phi(\mathcal{S}_k)$ is a basis of $H_k$.
\end{proof}

\section{The action of the braid group $B_n$ on $V^{\otimes n}$}
\label{section:braid}

As described in \cite{sartori2015alexander}, for each oriented tangle, we can associate a $\qgroup$-equivariant map using the $R$-matrix which gives the quasitriangular structure of $\qgroup$. See \cite[Section 4]{sartori2015alexander} for more detailed explanation of the construction. 

In this paper, we will consider the $\qgroup$-equivariant maps associated to $(n,n)$-tangles such that each strands are oriented upwards. We will omit the orientation of the tangles, remembering each strand is oriented upwards, and treat them as elements in the braid group $B_n$. In this way, we get an action of $B_n$ on the $\qgroup$-module $V^{\otimes n} = L(\epsilon_1)^{\otimes n}$. Since the action is $\qgroup$-equivariant, it sends highest weight vectors to highest weight vectors and preserves the weight, so we get an action of $B_n$ on the $\mathbb{C}(q)$-vector space $H_k$ for $k\in\{0,1,...,n-1\}$ defined as in Definition \ref{def:hk}. Conversely, the action of $B_n$ on $V^{\otimes n}$ is determined by its action on each $H_k$ by Schur's lemma. We will prove that the action of $B_n$ on $H_k$ is almost the same as the diagonal action of $B_n$ on the wedge product $\wedge^k H_1$, up to some powers of $q$. Hence, the action of $B_n$ on $V^{\otimes n}$ could be described completely by its action on $H_1$. Similar result is hinted in \cite[Proposition 2.5.1]{manion2018decategorification}. We will use this to compute the effect of adding full twists on $n$-positively oriented parallel strands on Alexander polynomial in Section \ref{section:Alex}.

We begin by recalling the definition of $\qgroup$-equivariant maps associated to oriented tangles. Every strand in the tangle is supposed to be oriented upwards, so we drop the arrows in the tangle diagrams in this section.
 
For a positive crossing $\slashoverback$, we associate the map $\check{R}:V^{\otimes2}\to V^{\otimes2}$, 
\begin{equation}
	\begin{split}
		 	&\check{R}(v_0\otimes v_0) = q v_0\otimes v_0, \quad  \check{R}(v_1\otimes v_0) = v_0\otimes v_1,\\
		 	&\R(v_0\otimes v_1) = v_1\otimes v_0 +(q-q^{-1})(v_0\otimes v_1), \quad \R(v_1\otimes v_1) = -q^{-1}v_1\otimes v_1,\\
	\end{split} \label{eq:Rmatrix}
\end{equation}
where $V= L(\epsilon_1)$ is the vector space representation of $\qgroup$ as in Definition \ref{def:vector space rep}.

Fix some positive integer $n$ throughout the section. For each generator $\sigma_t$ of the braid group $B_n$, we associate the map $\R_{t,t+1}: V^{\otimes n}\to V^{\otimes n} $ by applying $\R$ to the $t$th and $(t+1)$th component of $V^{\otimes n}$, \textit{i.e.},
\begin{equation}
	 \R_{i,i+1} := id^{\otimes (t-1)}\otimes \R \otimes id^{\otimes (n-t-1)}. 
\end{equation}
It follows from the construction that this defines a $\qgroup$-equivariant action of $B_n$ on $V^{\otimes n}$.
We denote the action by 
\[\Phi: B_n\to End_{\qgroup}(V^{\otimes n}).\]

 As explained above, this gives an action of $B_n$ on each $H_k$. See Definition \ref{def:hk} for the definition of $H_k$. We denote this action by \begin{equation*}
	\begin{split}
		 \cdot: B_n\times H_k &\to H_k\\
		 (\sigma,v) &\to \sigma \cdot v.\\
	\end{split}
\end{equation*}

It is easy to describe the action of $B_n$ on $H_1$ with respect to the basis $\left\{e_i\right\}_{1\leq i \leq n-1}$ as introduced in Example \ref{example:basis} via explicit calculations. We record it here for later convenience.

\begin{lemma}
	\label{lemma:act on H1}
	For each $t\in \left\{1,...,n-1\right\}$, we have 

	\begin{equation}
	\sigma_t\cdot e_j = \begin{cases}
		qe_j + e_{j+1},\quad \quad&\text{if  }j = t-1,\\
		-q^{-1}e_j,\quad&\text{if }j=t,  \\
		e_{j-1}+qe_j,\quad &\text{if } j= t+1, \\  qe_j\quad &\text{otherwise, i.e. if }$j$ \text{ is not adjacent to } t.\\
	\end{cases}
\end{equation}
\end{lemma}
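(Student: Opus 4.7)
The plan is to verify the lemma by direct computation, case by case, using the explicit description of each basis vector
\[ e_j = v_0^{\otimes (j-1)} \otimes \bigl(q^{-1} v_0 \otimes v_1 - v_1 \otimes v_0\bigr) \otimes v_0^{\otimes (n-1-j)} \]
together with the formulas for $\check{R}$ in Equation~(\ref{eq:Rmatrix}). The crucial observation is that $e_j$ differs from $v_0^{\otimes n}$ only in positions $j$ and $j+1$, so the action of $\sigma_t = \check{R}_{t,t+1}$ depends entirely on how the pair $\{t,t+1\}$ overlaps with $\{j,j+1\}$. This yields exactly four cases matching the four branches of the formula.

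First I would handle the non-adjacent case, $j \notin \{t-1,t,t+1\}$. Then positions $t$ and $t+1$ in $e_j$ both carry $v_0$, and since $\check{R}(v_0 \otimes v_0) = q\, v_0 \otimes v_0$, we immediately obtain $\sigma_t \cdot e_j = q\, e_j$. Next, the ``on-the-nose'' case $j = t$, where the entire nontrivial tensor factor $(q^{-1} v_0 \otimes v_1 - v_1 \otimes v_0)$ sits at positions $t,t+1$: here one computes
\[ \check{R}\bigl(q^{-1} v_0 \otimes v_1 - v_1 \otimes v_0\bigr) = q^{-1}\bigl(v_1 \otimes v_0 + (q-q^{-1})v_0 \otimes v_1\bigr) - v_0 \otimes v_1 = -q^{-1}\bigl(q^{-1} v_0\otimes v_1 - v_1 \otimes v_0\bigr), \]
giving $\sigma_t \cdot e_t = -q^{-1} e_t$.

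The two adjacent cases $j = t-1$ and $j = t+1$ are the only ones where the arithmetic is non-trivial, because the R-matrix acts across the boundary between the ``interesting'' factor of $e_j$ and a neighbouring $v_0$. For $j=t-1$, the three positions $t-1,t,t+1$ carry the combination $q^{-1} v_0 \otimes v_1 \otimes v_0 - v_1 \otimes v_0 \otimes v_0$; applying $\check{R}$ on positions $t,t+1$ and then rewriting the result in the basis $\{e_{t-1},e_t\}$ produces $q e_{t-1} + e_t$. Similarly, for $j=t+1$ the relevant three positions carry $v_0 \otimes q^{-1} v_0 \otimes v_1 - v_0 \otimes v_1 \otimes v_0$, and applying $\check{R}$ (which introduces the cross term from $\check{R}(v_0 \otimes v_1) = v_1 \otimes v_0 + (q-q^{-1}) v_0 \otimes v_1$) yields $e_t + q e_{t+1}$ after matching coefficients on the three tensor monomials $v_0\otimes v_0 \otimes v_1$, $v_1 \otimes v_0 \otimes v_0$, and $v_0 \otimes v_1 \otimes v_0$.

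The only real obstacle is the bookkeeping in the two adjacent cases, where the $(q-q^{-1})$ term from $\check{R}(v_0 \otimes v_1)$ must cancel correctly against contributions from both $e_{t-1}$ (resp.\ $e_{t+1}$) and $e_t$. Since the rewriting in the $\{e_i\}$ basis is unique (the $e_i$ are linearly independent, as shown in Example~\ref{example:basis}), it suffices to verify equality on each monomial $v_{\alpha_1}\otimes \cdots \otimes v_{\alpha_n}$, which reduces to finite arithmetic in $\mathbb{C}(q)$. No structural argument beyond the explicit formulas for $\check{R}$ and $e_j$ is required.
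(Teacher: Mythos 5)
Your proposal is correct and follows essentially the same route as the paper: a direct case-by-case computation with the explicit R-matrix applied to the two relevant tensor factors of $e_j$, exactly as in the paper's proof (which writes out only the $j=t-1$ case and leaves the others as "similar"). The coefficients you report in all four cases check out, including the cancellation of the $(q-q^{-1})$ cross term in the adjacent cases.
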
 

\begin{proof}
	 It follows from direct calculation. For example, if $j = t-1$, then \begin{equation*}
	 	\begin{split}
	 		 \sigma_t\cdot e_{t-1} &= id^{\otimes (t-1)}\otimes \R \otimes id^{\otimes (n-t-1)} (v_0^{\otimes t-2}\otimes ( q^{-1}v_0\otimes \underline{v_1\otimes v_0} -v_1\otimes \underline{v_0\otimes v_0})\otimes v_0^{\otimes (n-t-1)} )\\
	 		 & = v_0^{\otimes t-2}\otimes ( q^{-1}v_0\otimes \underline{v_0\otimes v_1} -qv_1\otimes \underline{v_0\otimes v_0})\otimes v_0^{\otimes (n-t-1)}\\
	 		 & = v_0^{\otimes t-2}\otimes (v_0\otimes \underline{v_1\otimes v_0}-qv_1\otimes \underline{v_0\otimes v_0}+ q^{-1}v_0\otimes \underline{v_0\otimes v_1} -v_0\otimes\underline{ v_1\otimes v_0})\otimes v_0^{\otimes (n-t-1)}\\
	 		 & = qe_{t-1}+e_t,
	 	\end{split}
	 \end{equation*}
 where the underline indicates the $t$th and $(t+1)$th components of the tensor product. The rest cases are similar. 
\end{proof}
Recall the isomorphism \[\psi_k: \wedge ^k H_1 \to H_k\] defined in Proposition \ref{prop:wedge product}. See in particular Equation \ref{eq:def of psi_1} and  \ref{eq:def of psi_2}.  The main result of this section is the following proposition.
\begin{proposition}
	\label{prop:wedge product action}
	For any $k=1,...,n-1$, any $v\in \wedge^k H_1$ and each generator $\sigma_t\in B_n$, we have: 
	\begin{equation}
		\psi_k(\sigma_t\cdot v) = q^{k-1} \sigma_t\cdot \psi_k(v),
		\label{eq:action}
	\end{equation}
where the action of $\sigma_t$ on $\wedge^k H_1$ is the diagonal action of $\sigma_t$ on each component of  $\wedge^k H_1$.
\end{proposition}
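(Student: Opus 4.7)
The plan is to reduce the claim to a direct verification on a basis of $\wedge^k H_1$, then carry out a finite case analysis. By multilinearity, it suffices to check Equation \eqref{eq:action} on the wedge products $e_I := e_{i_1}\wedge \cdots \wedge e_{i_k}$ with $1 \leq i_1<\cdots<i_k\leq n-1$. By Proposition \ref{prop:wedge product}, $\psi_k(e_I)=\phi(s_I)$, where $s_I\in \mathcal{S}_k$ is obtained from $I$ by grouping maximal runs of consecutive integers as described in Equation \eqref{eq:psi inverse}. The generator $\sigma_t$ acts on $V^{\otimes n}$ as $\check{R}_{t,t+1}$, affecting only tensor positions $t$ and $t+1$, while on $\wedge^k H_1$ it acts diagonally: $\sigma_t\cdot e_I = (\sigma_t e_{i_1})\wedge \cdots \wedge (\sigma_t e_{i_k})$, which can be expanded via Lemma \ref{lemma:act on H1}.

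The main organizing principle is to split cases according to the intersection $I\cap\{t-1,t,t+1\}$. Since Lemma \ref{lemma:act on H1} gives $\sigma_t\cdot e_j = q e_j$ for $j\notin\{t-1,t,t+1\}$, the diagonal action contributes a universal factor $q^{k-r}$ from the $k-r$ far-away indices, where $r:=|I\cap\{t-1,t,t+1\}|$, and only the remaining $r$ factors are affected non-trivially. In parallel, the action of $\check{R}_{t,t+1}$ on each pure-tensor summand of $\phi(s_I)$ depends only on the entries at positions $t,t+1$, which is in turn dictated by whether $t,t+1\in I$. The baseline case $r=0$ is immediate: the wedge side yields $q^k e_I$, while positions $t,t+1$ both carry $v_0$ and $\check{R}(v_0\otimes v_0)=q\,v_0\otimes v_0$ gives $\check{R}_{t,t+1}\phi(s_I)=q\,\phi(s_I)$, so both sides of Equation \eqref{eq:action} equal $q^k \phi(s_I)$. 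The three subcases of $r=1$ are analogous: one non-trivial wedge factor matches a single rewriting of $\phi(s_I)$ coming from $\check{R}(v_0\otimes v_1)$, $\check{R}(v_1\otimes v_0)$, or $\check{R}(v_1\otimes v_1)$.

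The main obstacle is the cases $r\in\{2,3\}$, corresponding to $t$ lying at the boundary of or strictly inside a consecutive block of indices of $s_I$. In each such case, both sides of Equation \eqref{eq:action} become non-trivial linear combinations of several basis vectors $\phi(s')$, and one must verify they coincide up to the scalar $q^{k-1}$. On the wedge side, cross-terms of the form $e_t\wedge e_t=0$ produce cancellations (for instance, when $r=2$ with $\{t-1,t+1\}\subset I$ and $t\notin I$, expanding $(qe_{t-1}+e_t)\wedge(e_t+qe_{t+1})$ leaves three surviving wedges); on the tensor side, the off-diagonal $(q-q^{-1})$ summand of $\check{R}(v_0\otimes v_1)$ produces the corresponding mixing, which must be re-expanded in the basis $\phi(\mathcal{S}_k)$ via the bijection $I\leftrightarrow s_I$. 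The book-keeping of exterior-algebra signs together with the $q$-coefficients arising from Equation \eqref{eq:wedge product} constitutes the bulk of the work, but the universal $q^{k-1}$ factor appears precisely because the $q^{k-r}$ contribution from the far-away wedge factors combines with the local $R$-matrix computation in a manner independent of $r$. Once both sides are written uniformly in the basis $\phi(\mathcal{S}_k)$, the remaining subcases $\{t-1,t\}\subset I$, $\{t,t+1\}\subset I$, $\{t-1,t+1\}\subset I$, and $\{t-1,t,t+1\}\subset I$ are all handled by direct coefficient comparison, finishing the proof.
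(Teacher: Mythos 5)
Your proposal is correct and follows essentially the same route as the paper: reduce to the basis elements $\psi_k(e_{i_1}\wedge\cdots\wedge e_{i_k})=\phi(s)$, note that the far-away wedge factors each contribute a factor of $q$ while $\check{R}_{t,t+1}$ only sees tensor positions $t,t+1$, and finish by a local case analysis with explicit coefficient comparison. The paper organizes its five cases by the position of $t$ relative to the blocks of the string $s$ rather than by $|I\cap\{t-1,t,t+1\}|$, but the underlying computations (including the $e_t\wedge e_t=0$ cancellation matching the $(q-q^{-1})$ off-diagonal term of $\check{R}$) are the same.
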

\begin{proof}
	 By Proposition \ref{prop:basis} and Proposition \ref{prop:wedge product}, it is enough to check the Equation \ref{eq:action} for $v\in \wedge^k H_1$ of the form 
 \[\left\{\psi_k^{-1}(\phi(s))\mid s\in \mathcal{S}_k\right\}.\]
Take an arbitrary $s = (a_1,b_1,a_2,b_2,...,b_l,a_{l+1})\in \mathcal{S}_k$. Recall $\phi(s)$ defined in Definition \ref{def:strings},
\[	\phi(s) = v_0^{\otimes a_1}\otimes E(v_1^{\otimes b_1})\otimes v_0^{\otimes a_2}\otimes E(v_1^{\otimes b_2}) \otimes ...\otimes E(v_1^{\otimes b_l})\otimes v_0^{\otimes a_{l+1}},\]
$e_i$ as defined in Example \ref{example:basis}, $c_j$ as defined in Equation \ref{eq:cj} and let $v =\psi_k^{-1}(\phi(s))= e_{i_1}\wedge ...\wedge e_{i_k}$ as defined in Equation \ref{eq:psi inverse}. 

Take some $\sigma_t\in B_n$. The proof will be divided into five cases depending on the position of $t$ relative to the positions that $E$'s are applied to in $\phi(s)$. The calculation is straightforward, and the hardest part is to keep check of the indices. 

\begin{enumerate}
	\item If $c_j+b_j<t<c_{j+1}$ for some $j$,\textit{ i.e.}, $\R$ is applied to some components $v_0\otimes v_0$ in $\phi(s)$. 
	
	Using Equation \ref{eq:Rmatrix}, we get \[\sigma_t\cdot \phi(s) = q\phi(s) = q \psi_k(v). \] 
	On the other hand, none of the indices $i_m$ in $v= e_{i_1}\wedge ...\wedge e_{i_k}$ equals $t-1$, $t$ or $t+1$, by the condition $c_j+b_j<t<c_{j+1}$ and Equation \ref{eq:psi inverse}. Then apply Lemma \ref{lemma:act on H1}, we get \[\psi_k(\sigma_t\cdot v) =\psi_k( \sigma_t\cdot e_{i_1}\wedge ...\wedge \sigma_t\cdot e_{i_k}) = \psi_k(qe_{i_1}\wedge ...\wedge qe_{i_k})=q^{k}\psi_k(v) = q^{k-1}\sigma_t\cdot\psi_k(v),\]
    so the proposition holds in this case.
	\item If $c_j<t<c_j+b_j$ for some $j$, \textit{i.e.}, $\R$ is applied to the $(t-c_j)$th and $(t-c_j+1)$th components in $E(v_1^{b_j})$. 
	
	Let $m = t-c_j$. The effect of applying $\sigma_t$ to $\phi(s)$ is the same as applying $\sigma_m$ to the $E(v_1^{b_j})$-tensor component. Note that\begin{equation*}
		\begin{split}
			 \sigma_m \cdot E(v_1^{\otimes b_j})&=\sum_{ r=0}^{b_j-1}(-1)^{r}q^{-(b_j-1-r)}\sigma_m\cdot (v_1^{\otimes r}\otimes v_0\otimes v_1^{\otimes (b_j-1-r)}) \\
			 &= (-1)^{m-1}q^{-(b_j-m)}\sigma_m\cdot (v_1^{\otimes (m-1)}\otimes (v_0\otimes v_1-qv_1\otimes v_0)\otimes v_1^{\otimes b_j-m}) \\
			 &\qquad\qquad \qquad +\sum_{\substack{0\leq r\leq b_j-1\\ r\neq m-1,m}}(-1)^{r}q^{-(b_j-1-r)}\sigma_m\cdot (v_1^{\otimes r}\otimes v_0\otimes v_1^{\otimes (b_j-1-r)})\\
			 & = (-1)^{m-1}q^{-(b_j-m)} v_1^{\otimes (m-1)}\otimes (v_1\otimes v_0 +(q-q^{-1})v_0\otimes v_1-qv_0\otimes v_1)\otimes v_1^{\otimes b_j-m} \\
			 &\qquad\qquad \qquad -q^{-1}\sum_{\substack{0\leq r\leq b_j-1\\ r\neq m-1,m}}(-1)^{r}q^{-(b_j-1-r)}v_1^{\otimes r}\otimes v_0\otimes v_1^{\otimes (b_j-1-r)}\\ 
			 & = -q^{-1}E(v_1^{\otimes b_j}).
		\end{split}
	\end{equation*}
Therefore \[\sigma_t\cdot \phi(s) = -q^{-1}\phi(s)=-q^{-1}\psi_k(v).\]

On the other hand, there exists some index $i_p$ in $v = e_{i_1}\wedge ...\wedge e_{i_k}$ which equals $t$, by Equation \ref{eq:psi inverse}. The action of $\sigma_t$ sends $e_{i_p}$ to $-q^{-1}e_{i_p}$. Now for indices except $i_{p-1},i_p, i_{p+1}$, they can't be equal to $t-1,t$ or $t+1$, and $\sigma_t$ acts on them by multiplication by $q$. For $e_{i_{p-1}}$, we have \begin{equation*}
	\sigma_t\cdot e_{i_{p-1}}=\begin{cases}
		qe_{i_{p-1}} + e_{i_{p}},&\quad \quad \text{if }i_p-i_{p-1} = 1,\\
		qe_{i_{p-1}},&\quad\quad \text{otherwise.}
	\end{cases}
\end{equation*}
For $e_{i_{p+1}}$, we have \begin{equation*}
	\sigma_t\cdot e_{i_{p+1}}=\begin{cases}
		qe_{i_{p+1}} + e_{i_{p}},&\quad \quad \text{if }i_{p+1}-i_p = 1,\\
		qe_{i_{p+1}},&\quad\quad \text{otherwise.}
	\end{cases}
\end{equation*}

From this, we get \[\sigma_t \cdot e_{i_{p-1}}\wedge \sigma_t \cdot e_{i_p}\wedge  \sigma_t \cdot e_{i_{p+1}}=-qe_{i_{p-1}}\wedge q^{-1}e_{i_p}\wedge qe_{i_{p+1}}.\] 
 Therefore, \[\psi_k(\sigma_t\cdot v)=\psi_k(\sigma_t \cdot e_{i_1}\wedge ...\wedge \sigma_t\cdot e_{i_k}) =-q^{k-2} \psi_k(v) = q^{k-1}\sigma_t\cdot\psi_k(v),\]
and the proposition holds in this case.
\item If $t = c_j$ for some $j$, and $a_j\neq 0$, i.e., $\R$ is applied to the first two components of $v_0\otimes E(v_1^{\otimes b_j})$.  
\label{case:3}
Note that
\begin{equation*}
	\begin{split}
		\R_{1,2} ( v_0\otimes E(v_1^{\otimes b_j})) &=  \sum_{r=0}^{b_j-1}(-1)^{r}q^{-(b_j-1-r)}\R_{1,2} (v_0\otimes v_1^{\otimes r}\otimes v_0\otimes v_1^{\otimes b_j-1-r})\\
		&= q^{-b_j+2}v_0\otimes v_0\otimes v_1^{\otimes (b_j-1)} +\sum_{r=1}^{b_j-1}(-1)^{r}q^{-(b_j-1-r)}v_1\otimes v_0\otimes v_1^{\otimes (r-1)}\otimes v_0\otimes v_1^{\otimes (b_j-1-r)}\\
		& \quad\quad\quad\quad  +\sum_{r=1}^{b_j-1}(-1)^{r}q^{-(b_j-1-r)}(q-q^{-1})v_0\otimes v_1^{\otimes r}\otimes  v_0\otimes v_1^{\otimes (b_j-1-r)}\\
		& =qv_0\otimes E(v_1^{\otimes b_j})+ (q^{-1}v_0\otimes v_1-v_1\otimes v_0)\otimes \sum_{r=1}^{b_j-1}(-1)^{r-1}q^{-(b_j-1-r)}v_1^{\otimes (r-1)}\otimes v_0\otimes v_1^{\otimes (b_j-1-r)}\\
		& = qv_0\otimes E(v_1^{\otimes b_j})+E(v_1^{\otimes 2})\otimes E(v_1^{\otimes (b_j-1)}).
	\end{split}
\end{equation*}
 We can define a new string $s'$ as follows. 
 \begin{enumerate}
 	\item If $b_j>2$, let $s' = (a'_1,b'_1,...,b'_{l+1},a'_{l+2})$ such that 
 	\begin{equation*}
 		 \begin{split}
 		 &a'_i = a_i, b'_i=b_i,  \text{ for } i<j,\\
 		 &a'_j = a_j-1,\,b'_j=2, \,a'_{j+1}=0, \,b'_{j+1} = b_j-1, \\
 		& a'_i = a_{i-1}, \,b'_i = b_{i-1}, \text{ for } i>j+1.
 		 \end{split}
 	\end{equation*}
 	 In words, we obtain $s'$ from $s$ by inserting a string $(2,0)$ between $a_j$ and $b_j$ in $s$, then changing $a_j$ to $a'_j=a_j-1$ and $b_j$ to $b'_j=b_j-1$.
 	\item If $b_j=2$, let $s' = (a'_1,b'_1,...,b'_l,a'_{l+1})$ such that 
 	\begin{equation*}
 		\begin{split}
 			&a'_i = a_i, b'_i=b_i,  \text{ for } i<j,\\
 			&a'_j= a_j-1, \,b'_j=2=b_j, \,a'_{j+1} = a_{j+1}+1, \,b'_{j+1} = b'_{j+1},\\
 			& a'_i = a_{i}, b'_i = b_{i}, \text{ for } i>j+1.
 		\end{split}
 	\end{equation*}
 In words, we change $a_j$ to $a'_j= a_j-1$, and $a_{j+1}$ to $a'_{j+1} = a_{j+1}+1$.
 \end{enumerate} 
Then it follows from the above calculation that \[\sigma_t\cdot \phi(s) = q\phi(s) +\phi(s').\]
On the other hand, by Equation \ref{eq:psi inverse}, there exists some index $i_p$ in $v = e_{i_1}\wedge ...\wedge e_{i_k}$ which is equal to $t+1$, and $i_{p-1} < t-1$ as $a_j \neq 0$. So all the indices except $i_p$ are not equal to $t-1,t$ or $t+1$. Hence, 
\begin{equation*}
	\begin{split}
		\psi_k(\sigma_t\cdot v) &= \psi_k(\sigma_t \cdot e_{i_1}\wedge ...\wedge \sigma_t\cdot e_{i_k}) \\
		&= \psi_k(qe_{i_1}\wedge ...\wedge (e_{i_p-1}+qe_{i_p})\wedge ...\wedge qe_{i_k})\\
		& = q^{k-1}(q\psi_k(v) +\psi_k(v')),
	\end{split}
\end{equation*}
where $v' = e_{i'_1}\wedge ...\wedge e_{i'_k}$ is obtained from $v$ by changing $i'_p = i_p-1$ and keeping all the other indices the same. It follows from Equation \ref{eq:psi inverse} that $\phi(s') = \psi_k(v')$, hence 
\[\psi_k(\sigma_t\cdot v) = q^{k-1}(q\psi_k(v)+ \psi_k(v')) = q^{k-1}(q\phi(s)+\phi(s'))=q^{k-1}\sigma_t \cdot \psi_k(v),\]
and the proposition holds in this case.
\item If $t = c_j+b_j$ for some $j$, and $a_{j+1}\neq 0$, i.e., $\R$ is applied to the last two components of $E(v_1^{\otimes b_j})\otimes v_0$.

The computation is similar to the one in Case (\ref{case:3}), and we leave it for the reader.
\item If $t = c_{j+1}$ for some $j$, and $a_{j+1}=0$. This is the same as $t = c_{j}+b_{j}$ and $a_{j+1}=0$. In this case, $\R$ is applied to the $b_j$th and $(b_j+1)$th components of $E(v_1^{\otimes b_j})\otimes E(v_1^{\otimes b_{j+1}})$. 

The computation could be carried out in a similar manner as in Case (\ref{case:3}), and one gets 
\begin{equation*}
	\begin{split}
	\sigma_{b_j}\cdot\left( E(v_1^{\otimes b_j})\otimes E(v_1^{\otimes b_{j+1}}) \right) & = qE(v_1^{\otimes b_j})\otimes E(v_1^{\otimes b_{j+1}}) \\
	& \quad\quad +  E(v_1^{\otimes (b_j+1)})\otimes E(v_1^{\otimes (b_{j+1}-1)})+E(v_1^{\otimes (b_j-1)})\otimes E(v_1^{\otimes (b_{j+1}+1)}).\\
	\end{split}
\end{equation*}
So \[ \sigma_t\cdot \phi(s) = q\phi(s)+\phi(s')+\phi(s'').\]
Here $s'$ is obtained from $s$ by the followings rules depending on whether $b_{j+1}=2$ or not. \begin{enumerate}
	\item If $b_{j+1}>2$, then we change $b_j$ to $b'_j = b_j+1$, and $b_{j+1}$ to $b'_{j+1} = b_{j+1}-1$.
	\item If $b_{j+1}=2$, then we change $b_j$ to $b'_{j}=b_j+1$, $a_{j+1}$ to $a'_{j+1} = a_{j+2}+1$, $b_{j+1}$ to $b'_{j+1} = b_{j+2}$, and letting $a'_i = a_{i+1}$, $b'_i = b_{i+1}$ for $i>j+1$. In words, we delete $a_{j+1}, b_{j+1}$ from $s$ and add $1$ to each of $b_j$ and $a_{j+2}$.
\end{enumerate} 

The string $s''$ is defined similarly, by switching the roles of $b_{j}$ and $b_{j+1}$.

On the other hand, there exists indices $i_p$ and $i_{p+1}$ in $v = e_{i_1}\wedge ...\wedge e_{i_k}$, such that \[i_p = t-1,\quad i_{p+1} = t+1,\] by the conditions $t= c_{j+1} = c_j+b_j$ and Equation \ref{eq:psi inverse}. All the other indices are not equal to $t-1,t$ or $t+1$. Hence,
\begin{equation*}
	\begin{split}
		\psi_k(\sigma_t\cdot v) &= \psi_k(\sigma_t \cdot e_{i_1}\wedge ...\wedge \sigma_t\cdot e_{i_k}) \\
		&= \psi_k(qe_{i_1}\wedge ...\wedge (qe_{i_p} + e_{i_p+1})\wedge (e_{i_{p+1}-1}+qe_{i_{p+1}})\wedge ...\wedge qe_{i_k})\\
		& = q^{k-1}(q\psi_k(v) +\psi_k(v')+\psi_k(v'')),
	\end{split}
\end{equation*}
where $v'$ is obtained from $v$ by changing $i_{p+1} =t+1$ to $i_{p+1}-1=t$, and $v''$ is obtained from $v$ by changing $i_p=t-1$ to $i_p+1=t$. Note that by Equation \ref{eq:psi inverse}, \[\psi_k(v')=\phi(s'),\quad \psi_k(v'') = \phi(s'').\]
Therefore the proposition holds in this case.

\end{enumerate}
\end{proof}

\section{The effect of adding twists to Alexander polynomials}
\label{section:Alex}

In this section, we study the effect of adding full twists along $n$-positively oriented parallel strands to the Alexander polynomial. By the analysis in the previous section, especially Proposition \ref{prop:wedge product action}, we can compute the action of $B_n$ on $V^{\otimes n}$ in terms of the action of $B_n$ on $H_1$. For the full twist $\tau\in B_n$, we will see the action of it on $H_1$ is a scalar multiplication by some power of $q$. Hence, $\tau$ acts on $V^{\otimes n}$ by scalar multiplications of different powers of $q$ on different highest weight subrepresentations of $V^{\otimes n}$. See Proposition \ref{prop:action on weight space} for the explicit statement. 

Let $\pi_k$ denote the projection of $V^{\otimes n}$ to the highest weight subrepresentation with weight $(n-k)\epsilon_1+k\epsilon_2$, then from the previous analysis we can express each $\pi_k$ as a $\mathbb{C}(q)$-linear combination of $\Phi(\tau^0),\Phi(\tau^1),...,$ $\Phi(\tau^{n-1})$. Therefore, we can express $\Phi(\tau^m)$ for any $m\in \mathbb{N}$ using $\Phi(\tau^0),\Phi(\tau^1),...,\Phi(\tau^{n-1})$. By the description of the Alexander polynomials in terms of $\qgroup$-equivariant maps, we get a similar result for Alexander polynomials. See Proposition \ref{prop:alex}. We explore some implication of this expression on the stabilization behavior of the Alexander polynomials when we insert enough full twists in Proposition \ref{prop:stab}.

We begin with some definitions. Recall the decomposition \[		V^{\otimes n} \cong \bigoplus^{n-1}_{k=0} {{n-1}\choose{k}}L\big((n-k)\epsilon_1+k\epsilon_2\big)\] in Lemma \ref{lemma:tensor of vector rep}, and the subspace $H_k$ of highest weight vectors with weight $(n-k)\epsilon_1+k\epsilon_2$ in Definition \ref{def:hk}
\begin{definition}
	For each $k\in\{0,1,...,n-1\}$, denote the subspace of highest weight subrepresentation with weight $(n-k)\epsilon_1+k\epsilon_2$ by $W_k$, \textit{i.e.},\[W_k= H_k\oplus F(H_k).\] Define $\pi_k:V^{\otimes n}\to V^{\otimes n}$ as the $\qgroup$-equivariant projection from $V^{\otimes n}$ to the subspace $W_k$.
\end{definition}

Note that $W_k \cong L((n-k)\epsilon_1+k\epsilon_2)^{\oplus{{n-1}\choose {k}}}$, as $H_k$ is the space of highest weight vectors of weight \newline $(n-k)\epsilon_1+k\epsilon_2$, and each copy of $L((n-k)\epsilon_1+k\epsilon_2)$ is spanned by $\{v_0^{(n-k)\epsilon_1+k\epsilon_2}, F(v_0^{(n-k)\epsilon_1+k\epsilon_2})\}$. Therefore, \[V^{\otimes n}\cong \bigoplus_{k=0}^{n-1}W_k,\quad  \text{and} \quad \quad id_{V^{\otimes n}}=\sum_{k=0}^{n-1}\pi_k.\]
\begin{definition}
	Define $\lambda = \sigma_{n-1}\sigma_{n-2}...\sigma_1\in B_m$, and let $\tau = \lambda^n$, which is the braid representing the full twist on $n$-strands. Denote the induced endomorphisms of $\lambda, \tau$ on $V^{\otimes n}$ by $\Phi(\lambda),\Phi(\tau)$ respectively.
\end{definition}

Since the action of $B_n$ on $V^{\otimes n}$ is $\qgroup$-equivariant, it restricts to an action on $W_k$ for each $k$. We will prove that the restriction $\Phi(\tau)|_{W_k}$ acts by a scalar multiplication on each $W_k$.

\begin{proposition}
	\label{prop:action on weight space}
	For each $k\in\{0,1,...,n-1\}$, we have \[\Phi(\tau)|_{W_k} = q^{n(n-1-2k)}id_{W_k}.\]
\end{proposition}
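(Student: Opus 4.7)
The plan is to reduce, via Schur's lemma together with Proposition \ref{prop:wedge product action}, to the single computation $\Phi(\tau)|_{H_1}=q^{n(n-3)}\mathrm{id}_{H_1}$, which can then be carried out directly using Lemma \ref{lemma:act on H1}.

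First, I would use the $\qgroup$-equivariance of $\Phi(\tau)$ to reduce to an assertion about each $H_k$. Equivariance forces $\Phi(\tau)$ to preserve the isotypic summand $W_k=H_k\oplus F(H_k)$, and the commutation $\Phi(\tau)\circ F=F\circ\Phi(\tau)$ shows that if $\Phi(\tau)|_{H_k}$ is a scalar then $\Phi(\tau)|_{W_k}$ is that same scalar on all of $W_k$. Thus it suffices to prove $\Phi(\tau)|_{H_k}=q^{n(n-1-2k)}\mathrm{id}_{H_k}$ for every $k\in\{0,\ldots,n-1\}$.

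For $k=0$ this is immediate: $H_0=\mathbb{C}(q)v_0^{\otimes n}$, each simple generator acts on $v_0^{\otimes n}$ by the scalar $q$ (since $\R(v_0\otimes v_0)=qv_0\otimes v_0$), and $\tau$ is a word of length $n(n-1)$ in simple generators, so $\Phi(\tau)v_0^{\otimes n}=q^{n(n-1)}v_0^{\otimes n}$. For $k\geq 1$ I would invoke Proposition \ref{prop:wedge product action}, which iterated over the $n(n-1)$ simple generators composing $\tau$ yields $\psi_k(\Phi(\tau)\cdot v)=q^{n(n-1)(k-1)}\,\Phi(\tau)\cdot\psi_k(v)$ for every $v\in\wedge^k H_1$. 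If one shows $\Phi(\tau)|_{H_1}=c\cdot\mathrm{id}_{H_1}$ for some scalar $c$, then the diagonal action on $\wedge^k H_1$ is multiplication by $c^k$, and the displayed identity collapses to $\Phi(\tau)|_{H_k}=c^{k}q^{-n(n-1)(k-1)}\mathrm{id}_{H_k}$. With $c=q^{n(n-3)}$, a direct exponent simplification $kn(n-3)-n(n-1)(k-1)=n(n-1-2k)$ gives the target scalar.

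It remains to show $\Phi(\tau)|_{H_1}=q^{n(n-3)}\mathrm{id}_{H_1}$. Using the basis $\{e_1,\ldots,e_{n-1}\}$ of Example \ref{example:basis} and iterating Lemma \ref{lemma:act on H1}, I expect to establish $\Phi(\lambda)e_i=q^{n-2}e_{i-1}$ for $i\geq 2$ and $\Phi(\lambda)e_1=-\sum_{j=1}^{n-1}q^{n-2-j}e_j$. A further induction then gives $\Phi(\lambda)^{k+1}e_1=q^{kn-2k-2}e_{n-k}$ for $1\leq k\leq n-1$, and setting $k=n-1$ yields $\Phi(\tau)e_1=\Phi(\lambda)^n e_1=q^{n(n-3)}e_1$. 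To upgrade this to an identity on all of $H_1$, I would note that $H_1$ is cyclic as a $B_n$-module generated by $e_1$ (since $\sigma_{j+1}\cdot e_j=qe_j+e_{j+1}$ inductively places each $e_{j+1}$ in the $B_n$-orbit of $e_1$), and that $\tau$ is central in $B_n$, so for every $b\in B_n$ one has $\Phi(\tau)\Phi(b)e_1=\Phi(b)\Phi(\tau)e_1=q^{n(n-3)}\Phi(b)e_1$.

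The main obstacle is the explicit computation of $\Phi(\lambda)$ on the basis $\{e_i\}$ and the subsequent identification of $\Phi(\lambda)^n e_1$; once the formulas for $\Phi(\lambda)$ are in hand, the iteration is a routine telescoping and the rest is formal.
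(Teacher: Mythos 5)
Your proof is correct, and its overall architecture (reduce to $H_k$ via equivariance, treat $k=0$ directly, bootstrap from $k=1$ to general $k$ using Proposition \ref{prop:wedge product action} with the exponent bookkeeping $kn(n-3)-(k-1)n(n-1)=n(n-1-2k)$) matches the paper's. The one genuine divergence is the key step $\Phi(\tau)|_{H_1}=q^{n(n-3)}\mathrm{id}_{H_1}$. The paper observes that the matrix of $\Phi(\lambda)|_{H_1}$ in the basis $\{e_i\}$ is diagonalizable with the $n-1$ distinct eigenvalues $\{e^{2\pi i j/n}q^{n-3}\}_{j=1}^{n-1}$, so its $n$-th power is automatically the scalar $q^{n(n-3)}$. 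You instead compute $\Phi(\lambda)^n e_1$ by hand — the cancellation giving $\Phi(\lambda)^2e_1=q^{n-4}e_{n-1}$ followed by the telescoping $\Phi(\lambda)e_i=q^{n-2}e_{i-1}$ — and then upgrade the single eigenvector identity to all of $H_1$ using that $H_1$ is cyclic over $\mathbb{C}(q)[B_n]$ (generated by $e_1$, since $e_{j+1}=\sigma_{j+1}\cdot e_j-qe_j$; strictly speaking you need the linear span of the orbit, not the orbit itself, but linearity of $\Phi(\tau)$ makes this harmless) together with the centrality of the full twist in $B_n$. Your route is more elementary and self-contained: it avoids having to identify the spectrum of the companion-type matrix $\Phi(\lambda)|_{H_1}$, at the cost of invoking the (standard) fact that $\tau$ generates the center of $B_n$. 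The paper's route is shorter once the eigenvalues are known, and gives the extra information that $\Phi(\lambda)|_{H_1}$ itself is semisimple with $n$-th roots of $q^{n(n-3)}$ as eigenvalues. Both are valid; I verified your telescoping identity $\Phi(\lambda)^{k+1}e_1=q^{kn-2k-2}e_{n-k}$ and the final exponent $n(n-3)$ at $k=n-1$.
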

\begin{proof}
	Since $H_k = \ker(E)$, $F(H_k) = \ker(F)$ for $E,F:W_k\to W_k$ respectively, and $\Phi(\tau)|_{W_k}$ commutes with $E,F$, the map $\Phi(\tau)|_{W_k}$ could be written as \[\Phi(\tau)|_{W_k} = \Phi(\tau)|_{H_k} \oplus \Phi(\tau)|_{F(H_k)}.\] 	
	It is enough to check that $\Phi(\tau)|_{H_k} = q^{n(n-1-2k)}id_{H_k},$ as $\Phi(\tau)|_{W_k}$ commutes with $F$.

	When $k=0$, $H_0 $ is spanned by $  v_0^{\otimes n}$. By Equation \ref{eq:Rmatrix}, we have \[\sigma_t\cdot v_0^{\otimes n} = q v_0^{\otimes n},\] for $t\in\left\{1,...,n-1 \right\}$. So \[\tau \cdot v_0^{\otimes n} = q^{n(n-1)} v_0^{\otimes n},\]
	and the proposition holds for $k=0$.
	
	When $k=1$, a basis $\left\{e_i\mid i=1,...,n-1\right\}$ of $H_1$ is given by Example \ref{example:basis}. We know the action of $\sigma_t$ on $e_i$ for any pair $(t,i)$, as described in Lemma \ref{lemma:act on H1}. It is then straightforward to compute that \begin{equation}
		\begin{split}
			&\lambda\cdot e_1 = \sum_{j=1}^{n-1}-q^{n-2-j}e_j, \quad \text{and}\\
			&\lambda\cdot e_i = q^{n-2}e_{i-1}, \quad \text{for } i=2,...,n-1. 
		\end{split}
	\end{equation}
Let $A$ be the $(n-1)\times (n-1)$ matrix representing $\Phi(\lambda)|_{H_1}$, then $A$ is diagonalizable over $\mathbb{C}(q)$, with $n-1$ distinct eigenvalues $\left\{e^{2j\pi i/n}q^{n-3} \mid j=1,...,n-1\right\}.$ Hence the matrix $A^n$ which represents $\Phi(\tau)|_{H_k}$ is a diagonal matrix with every diagonal entry equal to $q^{n(n-3)}$. Therefore, \[ \Phi(\tau)|_{H_1} = q^{n(n-3)} id_{H_1},\] and the proposition holds for $k=1$.

When $2\leq k \leq n-1$, we use the previous calculation in the case $k=1$ and Proposition \ref{prop:wedge product action}. Proposition \ref{prop:wedge product action} says that for each generator $\sigma_t\in B_n$, and every $v = e_{i_1}\wedge ...\wedge e_{i_k}\in \wedge^kH_1$, we have 	\begin{equation*}
	\psi_k(\sigma_t\cdot v) = q^{k-1} \sigma_t\cdot \psi_k(v),
\end{equation*} so 
\begin{equation*}
	\psi_k(\tau \cdot v) = q^{(k-1)n(n-1)}\tau \cdot \psi_k(v),
\end{equation*}
as $\tau$ is the product of $n(n-1)$-many elements of the form $\sigma_t$ in $B_n$. 

By the calculation in the case when $k=1$, we get that
\begin{equation*}
	\psi_k(\tau \cdot v ) = \psi_k(\tau\cdot e_{i_1}\wedge ...\wedge \tau\cdot e_{i_k}) = q^{kn(n-3)}\psi_k(v),
\end{equation*}
 so 
 \begin{equation}
 	\tau \cdot \psi_k(v) = q^{-(k-1)n(n-1)} \psi_k(\tau \cdot v) = q^{n(n-1-2k)}\psi_k(v),\,\,\, \text{for any } v\in \wedge^k H_1.
 \end{equation}
Since $\psi_k:\wedge^k H_1 \to H_k$ is an isomorphism, we get \[\Phi(\tau)|_{H_k} = q^{n(n-1-2k)}id_{H_k},\]
for $2\leq k \leq n-1$ as required.
\end{proof} 
An immediate corollary of this proposition is 
\begin{corollary}
	We can write $\Phi(\tau):V^{\otimes n}\to V^{\otimes n}$ as a $\mathbb{C}[q,q^{-1}]$-linear combination of $\pi_k$, \textit{i.e.},
	\begin{equation}
		\Phi(\tau) = \sum_{k=0}^{n-1}q^{n(n-1-2k)}\pi_k.
	\end{equation}
\label{coro:twist}
\end{corollary}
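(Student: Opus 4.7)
The plan is to observe that this is an immediate bookkeeping consequence of Proposition \ref{prop:action on weight space} combined with the direct sum decomposition $V^{\otimes n}\cong\bigoplus_{k=0}^{n-1}W_k$. First I would write the identity endomorphism of $V^{\otimes n}$ as the sum of the projections, $id_{V^{\otimes n}}=\sum_{k=0}^{n-1}\pi_k$, which is valid because the $W_k$'s are the isotypic components of $V^{\otimes n}$ under the $\qgroup$-action and thus give a direct sum decomposition.

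Next I would post-compose with $\Phi(\tau)$ to get $\Phi(\tau)=\sum_{k=0}^{n-1}\Phi(\tau)\circ\pi_k$. For each $k$, the image of $\pi_k$ lies in $W_k$, so on this image $\Phi(\tau)$ acts as $\Phi(\tau)|_{W_k}$. By Proposition \ref{prop:action on weight space}, this restriction equals $q^{n(n-1-2k)}\,id_{W_k}$. Therefore $\Phi(\tau)\circ\pi_k=q^{n(n-1-2k)}\pi_k$, and summing over $k$ yields the desired formula.

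There is essentially no obstacle here beyond unwinding definitions; the substantive content has already been isolated in Proposition \ref{prop:action on weight space}. The only very minor point worth noting in the write-up is that $\Phi(\tau)$ preserves each $W_k$ (needed so that $\Phi(\tau)\circ\pi_k=\pi_k\circ\Phi(\tau)\circ\pi_k$ makes sense as acting by a scalar on $W_k$), but this is automatic since $\Phi$ is $\qgroup$-equivariant and the $W_k$ are the $\qgroup$-isotypic summands.
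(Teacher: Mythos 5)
Your proof is correct and is exactly the argument the paper intends: the corollary is stated as an immediate consequence of Proposition \ref{prop:action on weight space} together with the decomposition $id_{V^{\otimes n}}=\sum_{k=0}^{n-1}\pi_k$, which is precisely what you spell out.
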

Now the task is to represent the projection maps $\{\pi_k \mid k=0,...,n-1\}$ in terms of linear combinations of powers of $\Phi(\tau)$, which could be done easily by solving a linear system of equations. This linear system is guaranteed to be invertible by the non-vanishing of the Vandermonde determinant.

\begin{definition}
	\label{def:vandermonde}
	Foe each $n\geq2$, define $B(n)$ as  the $n\times n$ matrix by
	\begin{equation}
		B(n)_{ij} = q^{(i-1)n(n-1-2(j-1))}.
	\end{equation}
	
	Since $B(n)$ is a Vandermonde matrix, it is invertible. Let $C(n)$ denote the inverse matrix of $B(n)$.
	
\end{definition}
Note that while each entry $B(n)_{i,j}$ is a Laurent polynomial in $q$, each entry $C(n)_{i,j}$ is a rational function in $q$ instead, as we divide out the determinant of $B(n)$ when inverting the matrix.

Let $\overrightarrow{\tau} = (\Phi(\tau^0),\Phi(\tau^1),...,\Phi(\tau^{n-1}))^\intercal$, $\overrightarrow{\pi} = (\pi_0,\pi_1,...,\pi_{n-1})^\intercal$, where $\Phi(\tau^0) = id_{V^{\otimes n}}$.  By Corollary \ref{coro:twist},  \[\overrightarrow{\tau} = B(n)\overrightarrow{\pi},\]
so 
  \[\overrightarrow{\pi} = C(n)\overrightarrow{\tau},\]
  and we can express $\Phi(\tau^m)$ for any $m\geq 0$ as a $\mathbb{C}(q)$-linear combination of $\Phi(\tau^0),...,\Phi(\tau^{n-1})$.

\begin{proposition}
	 For any $m\geq0$, we have \begin{equation}
	 	\Phi(\tau^m) = \sum_{i=0}^{n-1}\sum_{j=0}^{n-1}q^{mn(n-1-2i)}C(n)_{i+1,j+1}\Phi(\tau^{j}),
	 \end{equation}
 \label{prop:twist}
 where $C(n)$ is defined in Definition \ref{def:vandermonde}. In particular, $C(n)$ only depends on $n$. 
\end{proposition}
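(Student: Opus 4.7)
The plan is to leverage Proposition \ref{prop:action on weight space} together with the invertibility of the Vandermonde matrix $B(n)$, so the proof reduces to assembling what has already been established.

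First, I would observe that Proposition \ref{prop:action on weight space} gives $\Phi(\tau)|_{W_k} = q^{n(n-1-2k)} \mathrm{id}_{W_k}$ for every $k \in \{0,\dots,n-1\}$. Since the $W_k$ are $\qgroup$-subrepresentations and the braid action is $\qgroup$-equivariant, each $W_k$ is $\Phi(\tau)$-invariant and the projections $\pi_k$ commute with $\Phi(\tau)$. Using the decomposition $\mathrm{id}_{V^{\otimes n}} = \sum_k \pi_k$ and the orthogonality $\pi_i \pi_j = \delta_{ij}\pi_i$, raising Corollary \ref{coro:twist} to the $m$-th power gives
\begin{equation*}
\Phi(\tau^m) = \sum_{i=0}^{n-1} q^{mn(n-1-2i)}\pi_i,
\end{equation*}
for every $m \geq 0$. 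This is the core scalar-on-each-summand statement that drives everything.

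Next, specializing the displayed identity to $m = 0, 1, \dots, n-1$ yields the linear system $\overrightarrow{\tau} = B(n)\overrightarrow{\pi}$ that was set up just before the proposition. By Definition \ref{def:vandermonde}, $B(n)$ is a Vandermonde matrix in the pairwise distinct entries $q^{n(n-1-2k)}$ (distinctness over $\mathbb{C}(q)$ being immediate since the exponents $n(n-1-2k)$ are distinct integers for $k = 0,\dots,n-1$), so it is invertible with inverse $C(n)$. Therefore
\begin{equation*}
\pi_i = \sum_{j=0}^{n-1} C(n)_{i+1,\,j+1}\,\Phi(\tau^j).
\end{equation*}

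Finally, substituting this expression for $\pi_i$ into the formula for $\Phi(\tau^m)$ from the first step yields exactly
\begin{equation*}
\Phi(\tau^m) = \sum_{i=0}^{n-1}\sum_{j=0}^{n-1} q^{mn(n-1-2i)}\,C(n)_{i+1,\,j+1}\,\Phi(\tau^j),
\end{equation*}
which is the claim. The matrix $C(n)$ is purely a function of $n$ (and $q$), independent of $L$ and $m$, which is the final assertion of the proposition. There is no serious obstacle here: the only genuine content is Proposition \ref{prop:action on weight space} and the invertibility of $B(n)$, and the rest is bookkeeping.
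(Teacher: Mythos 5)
Your proposal is correct and follows exactly the route the paper takes: raise Corollary \ref{coro:twist} to the $m$-th power using the orthogonality of the projections $\pi_k$, invert the Vandermonde system $\overrightarrow{\tau}=B(n)\overrightarrow{\pi}$, and substitute. The paper compresses all of this into ``it follows directly from Corollary \ref{coro:twist}'' plus the preceding discussion, so you have simply written out the same bookkeeping in full.
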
 
\begin{proof}
	It follows directly from Corollary \ref{coro:twist}.
\end{proof}

Let $L$ be an oriented link. Recall the Alexander polynomial $\Delta(L)$ could be obtained as follows. Cutting $L$ along a strand to get a $1$-$1$ tangle. This tangle gives a $\qgroup$-equivariant map on $V$, which is a scalar multiplication of the identity. Denote this scalar by $\hat{Q}(L)$. Then \[\Delta(L)=\hat{Q}(L)\mid_{q=t^{1/2}}.\] See \cite[Section 4]{sartori2015alexander} for details.

Let $L$ be an oriented link with some specific link diagram, such that some part of the link diagram consists of $n$-parallel strands of the same orientation. Let $\mathcal{L}$ denote this particular link diagram with this choice of $n$-parallel strands. For $m\geq 1$, let $\mathcal{L}_m$ denote the oriented link obtained by adding $m$-full twists along the chosen $n$-parallel strands.  Denote $\mathcal{L}_0=L$. From the above calculation on the $\tau^m:V^{\otimes n}\to V^{\otimes n}$, and the expression of the Alexander polynomial as a $\qgroup$-invariant, we get a similar relation on the Alexander polynomials of $\mathcal{L}_m$ with the Alexander polynomials of $\mathcal{L}_0,...,\mathcal{L}_{n-1}$.

\begin{proposition}
	For any $m\geq 0$, we have
		\label{prop:alex} \begin{equation}	
		\label{eq:alex poly}
		 \Delta(\mathcal{L}_m) = \sum_{i=0}^{n-1}\sum_{j=0}^{n-1}t^{mn(n-1-2i)/2}C(n)_{i+1,j+1}|_{q = t^{1/2}}\Delta(\mathcal{L}_j).
	\end{equation}
\end{proposition}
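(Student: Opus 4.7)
The plan is to reduce the Alexander polynomial identity to the endomorphism identity already established in Proposition \ref{prop:twist}, using the functorial nature of the $\qgroup$-equivariant maps associated to oriented tangles.

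First, I would set up the tangle picture. Cutting $\mathcal{L}_m$ along a strand (necessarily away from the chosen $n$-parallel band, which is where the full twists live) produces a $(1,1)$-tangle $T_m$. By construction, $T_m$ decomposes vertically as a composition $T_m = D \circ \bigl(\mathrm{id} \otimes \Phi(\tau^m) \otimes \mathrm{id}\bigr) \circ U$ for some fixed upper tangle $U$ and lower tangle $D$ that do not depend on $m$, because the only difference between $\mathcal{L}_m$ and $\mathcal{L}_0$ is the insertion of $m$ full twists in the fixed $n$-parallel band. Under the functor $\Phi$ from tangles to $\qgroup$-equivariant maps (see \cite[Section 4]{sartori2015alexander}), this composition of tangles becomes a composition of $\qgroup$-equivariant maps $V \to V$, and Schur's lemma (applied to the fact that this map is an endomorphism of the irreducible $V$) forces it to be a scalar $\hat{Q}(\mathcal{L}_m) \in \mathbb{C}(q)$.

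Next, I would apply Proposition \ref{prop:twist}, which states
\[ \Phi(\tau^m) = \sum_{i=0}^{n-1}\sum_{j=0}^{n-1} q^{mn(n-1-2i)}\, C(n)_{i+1,j+1}\, \Phi(\tau^{j}). \]
Because composition of linear maps is bilinear, plugging this expansion into the middle slot of $D \circ (-) \circ U$ yields
\[ \hat{Q}(\mathcal{L}_m) \cdot \mathrm{id}_V \;=\; \sum_{i=0}^{n-1}\sum_{j=0}^{n-1} q^{mn(n-1-2i)}\, C(n)_{i+1,j+1}\, \hat{Q}(\mathcal{L}_j) \cdot \mathrm{id}_V, \]
since the scalar associated to $\mathcal{L}_j$ arises in exactly the same way from the tangle $D \circ (\mathrm{id} \otimes \Phi(\tau^j) \otimes \mathrm{id}) \circ U$.

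Finally, specializing $q = t^{1/2}$ in the resulting scalar identity converts each $\hat{Q}(\mathcal{L}_j)$ into $\Delta(\mathcal{L}_j)$ and yields the claimed formula. The only point requiring care is that the coefficients $C(n)_{i+1,j+1}$ are \emph{a priori} rational functions in $q$ (Definition \ref{def:vandermonde}), so one should verify that their denominators (powers of the Vandermonde determinant of distinct monomials in $q$) do not vanish at $q = t^{1/2}$; this is immediate since these denominators are nonzero Laurent polynomials in $q$, and $t^{1/2}$ is treated as a formal variable. Beyond this bookkeeping, there is no genuine obstacle: all the work has already been done in Proposition \ref{prop:twist}, and this statement is essentially its translation through the Reshetikhin--Turaev-type construction of the Alexander polynomial.
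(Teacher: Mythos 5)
Your proof is correct and follows essentially the same route as the paper, which simply cites Proposition \ref{prop:twist} together with the description of $\Delta$ as the scalar $\hat{Q}$ of the $(1,1)$-tangle, specialized at $q=t^{1/2}$; you have merely spelled out the linearity and Schur's-lemma bookkeeping that the paper leaves implicit. No gaps.
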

\begin{proof}
	It follows from Proposition \ref{prop:twist} and the expression of Alexander polynomials as a $\qgroup$-quantum invariant.
\end{proof}

This expression doesn't look so useful at first glance. In particular, $C(n)_{ij}$ is not a Laurent polynomial, as it has the $set(B(n))$ as its denominator. However, as all the Alexander polynomials are Laurent polynomials in $t$, the coefficient before each $\Delta(\mathcal{L}_j)$ in the expression $(\ref{eq:alex poly})$ is a Laurent polynomial as well. We give them a name for the convenience of later discussion. 

\begin{definition}

 \label{def:coefficients}
 
 For each pair $(m,j)$ with $m\geq 0$ and $0\leq j \leq n-1$, define the Laurent polynomial $f_{m,j,n}(t)$ by 
 \[f_{m,j,n}(t)= \sum_{i=0}^{n-1}t^{mn(n-1-2i)/2}C(n)_{i+1,j+1}|_{q = t^{1/2}}.\]
\end{definition}

With $f_{m,j,n}(t)$, we can write the Alexander polynomial $\Delta(\mathcal{L}_m)$ as

\begin{equation}
	 \label{eq:alex poly 2}
	 \Delta(\mathcal{L}_m) = \sum_{j=0}^{n-1}f_{m,j,n}(t)\Delta(\mathcal{L}_j).
\end{equation}
Note that the coefficients $f_{m,j,n}(t)$ are determined by $m,j,n$ and don't depend on the specific link $L$. 

As $m\to \infty$, we have the following stabilization results of the expressions $f_{m,j,n}$.
\newpage

\begin{lemma}
	\label{lem:stab}
	
	The coefficients $f_{m,j,n}(t)$  stabilize in the following sense as $m\to \infty$:
	
	There exists a Laurent series $g_{j,n}(t)$ with finitely many terms of negative degree in $t$, such that for any $k\in \mathbb{N}$, there exists $N\in\mathbb{N}$, where for any $m\geq N$, the first $k$ of $f_{m,j,n}(t)$ terms in the increasing order of powers of $t$  equal the first $k$ terms of \[t^{-mn(n-1)/2}g_{j,n}(t).\]

\end{lemma}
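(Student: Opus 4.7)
The plan is to take $g_{j,n}(t) := C(n)_{n, j+1}\big|_{q = t^{1/2}}$, expanded as a Laurent series at $t=0$, and verify that it does the job. A preliminary step is to check that each entry $C(n)_{i+1, j+1}\big|_{q=t^{1/2}}$ genuinely lies in $\mathbb{C}((t))$ rather than $\mathbb{C}((t^{1/2}))$. This is true because every exponent $(i-1)n(n-1-2(j-1))$ appearing in $B(n)$ is even (since $n(n-1)$ is always even), so $B(n)$, and hence the inverse matrix $C(n)$, has entries in $\mathbb{C}(q^2)$. After the substitution $q = t^{1/2}$ each entry becomes a rational function of $t$, which admits a Laurent expansion at $t=0$ with only finitely many terms of negative degree. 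In particular, the proposed $g_{j,n}(t)$ is of the type demanded by the statement.

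With that in place, abbreviate $c_i(t) := C(n)_{i+1, j+1}\big|_{q=t^{1/2}}$ and $\mu_i(m) := \tfrac{1}{2} m n (n-1-2i)$, so that
\[
f_{m,j,n}(t) = \sum_{i=0}^{n-1} t^{\mu_i(m)} c_i(t).
\]
The key observation is that the exponents $\mu_i(m)$ are strictly decreasing in $i$ with consecutive gaps equal to $mn$, and the minimum is $\mu_{n-1}(m) = -mn(n-1)/2$. Letting $\nu_i$ denote the order at $0$ of the Laurent expansion of $c_i(t)$ (a finite integer independent of $m$), the $i=n-1$ summand contributes a Laurent series starting at degree $-mn(n-1)/2 + \nu_{n-1}$, while each summand with $i<n-1$ contributes one starting at a degree at least $-mn(n-1)/2 + mn + \nu_i$.

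Given $k \in \mathbb{N}$, I would then pick $N$ large enough that $mn > k + \max_{0 \leq i < n-1}(\nu_{n-1} - \nu_i)$ whenever $m \geq N$; such an $N$ exists because the $\nu_i$'s are fixed constants. For such $m$, the degrees
\[
-\tfrac{mn(n-1)}{2} + \nu_{n-1},\ -\tfrac{mn(n-1)}{2} + \nu_{n-1} + 1,\ \ldots,\ -\tfrac{mn(n-1)}{2} + \nu_{n-1} + k - 1
\]
of the first $k$ potential terms of $f_{m,j,n}(t)$ receive no contribution from any summand with $i<n-1$, so they coincide with the corresponding terms of $t^{-mn(n-1)/2} g_{j,n}(t)$, as required.

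The argument is essentially degree bookkeeping, and I do not anticipate any serious difficulty. The only place where one can slip is the preliminary verification that the inverse Vandermonde entries become Laurent series in integer powers of $t$; without it, the conclusion would have to be read in $\mathbb{C}((t^{1/2}))$ rather than $\mathbb{C}((t))$, and the indexing of ``first $k$ terms'' would become ambiguous.
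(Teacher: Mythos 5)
Your proof is correct and follows essentially the same route as the paper: the same choice $g_{j,n}(t) = C(n)_{n,j+1}\big|_{q=t^{1/2}}$ and the same gap-of-size-$mn$ degree bookkeeping, with the added (welcome) check that all exponents of $t$ are integers. The only point left tacit is that $\nu_{n-1}$ is finite, i.e.\ $C(n)_{n,j+1}\neq 0$; this holds because the entries of the inverse of a Vandermonde matrix with distinct nonzero nodes are nonzero, a fact the paper likewise only asserts.
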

\begin{proof}
	 It follows from the expression of of $f_{m,j,n}(t)$. We formally invert the denominator of $C(n)$ as a Laurent series with finitely many terms of negative degree in $t$. Since $C(n)$ is some fixed matrix which only depends on $n$, for large enough $m$, the term with lowest degree of $t$ in the expression 
	 \[\sum_{i=0}^{n-2}t^{mn(n-1-2i)/2}C(n)_{i+1,j+1}|_{q = t^{1/2}},\]
	  is $t^{mn(3-n)/2+k_1}$ for some constant $k_1$, which is obtained when $i = n-2$, and $k_1$ comes from the contribution of $C(n)$.
	  
	   On the other hand, the lowest power of $t$ in $f_{L,m,j}(t)$ is  $t^{mn(1-n)/2+k_2}$ for some constant $k_2$, where $k_2$ again comes from the contribution of $C(n)$. The difference in the powers is $mn+k_1-k_2$, which goes to infinity as $m\to \infty$. Therefore, for $m$ large enough (since there are only finitely many $j$, we can pick $m$ large enough such that the following holds for every $j$), the first $k$ terms of $f_{m,j,n}(t)$ are the same as the first $k$ terms of \[t^{-mn(n-1)/2}C(n)_{n,j+1}|_{q = t^{1/2}}.\]
	   Then \[g_{j,n}(t):=C(n)_{n,j+1}|_{q = t^{1/2}}\] satisfies the requirement of the lemma. From explicit formula of $C(n)_{n,j+1}$, we see $g_{j,n}(t)$ is a non-zero Laurent series.
\end{proof}

Because of Equation \ref{eq:alex poly 2}, and that all the Alexander polynomials $ \Delta(\mathcal{L}_j)$ for $j\in\{0,1,...,n-1\}$ are Laurent polynomials of finite degree, we have similar stabilization results as in Lemma \ref{lem:stab} for the Alexander polynomial $\Delta(\mathcal{L}_m)$ as $m\to \infty$. The difference is that the limiting Laurent series obtained in this way might vanish, in which case we switch to the next leading Laurent series, until we reach a non-vanishing one. This explains the appearance of $r$ in the next proposition.

\begin{proposition}
	\label{prop:stab}
	 The Alexander polynomials $\Delta(\mathcal{L}_m)$ stabilizes as $m\to \infty$ in the following sense:

	 There exists a Laurent series $h_{\mathcal{L}}(t)$ with finitely many terms of negative degree in $t$, and some integer $r \in \left[\frac{n-1}{2},n-1\right]$, such that for any $k\in \mathbb{N}$, there exists $N\in \mathbb{N}$ where for any $m\geq N$, the first $k$ terms in the increasing order of degree of $t$ of $\Delta(\mathcal{L}_m)$ agree with the first $k$ terms of  \[t^{mn(n-1-2r)/2}h_{\mathcal{L}}(t).\]

\end{proposition}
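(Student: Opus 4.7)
The plan is to combine Equation \ref{eq:alex poly 2} with the stabilization statement for $f_{m,j,n}(t)$ given in Lemma \ref{lem:stab}, refined to identify the correct leading behavior. First I would substitute Definition \ref{def:coefficients} into Equation \ref{eq:alex poly 2} and interchange the two sums to write
\[
\Delta(\mathcal{L}_m) \;=\; \sum_{i=0}^{n-1} t^{mn(n-1-2i)/2}\, h_i(t),
\]
where $h_i(t) := \sum_{j=0}^{n-1} C(n)_{i+1,j+1}\big|_{q=t^{1/2}} \Delta(\mathcal{L}_j)$. After formally inverting the Vandermonde-type denominator as in the proof of Lemma \ref{lem:stab}, each $h_i(t)$ becomes a Laurent series with only finitely many negative-degree terms (possibly zero), and it depends on $\mathcal{L}$ only through $\Delta(\mathcal{L}_0), \dots, \Delta(\mathcal{L}_{n-1})$.

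Next I would let $r$ be the largest index $i \in \{0,1,\dots,n-1\}$ for which $h_i(t) \neq 0$ as a Laurent series, and set $h_{\mathcal{L}}(t) := h_r(t)$. Since $i \mapsto n-1-2i$ is strictly decreasing, the $i=r$ summand produces the lowest powers of $t$ in the above display for large $m$, and any contribution from $i<r$ is shifted at least $mn$ powers of $t$ to the right, while the summands with $i>r$ vanish. Combined with the fact that $h_r(t)$ has only finitely many negative-degree terms and $\Delta(\mathcal{L}_m)$ is a genuine Laurent polynomial, this yields the desired quantitative stabilization: given any $k$, for all sufficiently large $m$ the first $k$ terms of $\Delta(\mathcal{L}_m)$ agree with those of $t^{mn(n-1-2r)/2} h_r(t)$. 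This establishes existence of $h_{\mathcal{L}}$ and the upper bound $r \leq n-1$.

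For the lower bound $r \geq (n-1)/2$, I would invoke the conjugation symmetry of the Alexander polynomial: for each $m$ there is a constant $c_m$, linear in $m$ because each additional full twist on $n$ co-oriented strands contributes a controlled change in writhe, such that $\Delta(\mathcal{L}_m)(t^{-1}) = \pm t^{c_m}\Delta(\mathcal{L}_m)(t)$. Substituting $t \mapsto t^{-1}$ into the expansion above and reindexing $i \mapsto n-1-i$ converts $\Delta(\mathcal{L}_m)(t^{-1})$ into a sum with the same powers $t^{mn(n-1-2i)/2}$ but with $h_i$ replaced by $h_{n-1-i}(t^{-1})$. Matching this against $\pm t^{c_m}\Delta(\mathcal{L}_m)(t)$ term by term in the $m$-dependent exponents produces a pairing of the form $h_{n-1-i}(t^{-1}) = \pm\,(\text{fixed power of }t)\cdot h_i(t)$. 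In particular $h_r \neq 0$ forces $h_{n-1-r} \neq 0$, and maximality of $r$ gives $n-1-r \leq r$, i.e.\ $r \geq (n-1)/2$.

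The step I expect to be the main obstacle is precisely this symmetry argument: one must carefully track how the linear-in-$m$ shift $c_m$ absorbs into the factors $t^{mn(n-1-2i)/2}$ so that the induced identification is exactly $i \leftrightarrow n-1-i$ and not a degenerate relation forcing every $h_i$ to vanish. Once the pairing $h_i \leftrightarrow h_{n-1-i}$ is pinned down, the bound on $r$ and hence the proposition follow at once, while the rest of the argument is the same degree comparison already used in Lemma \ref{lem:stab}.
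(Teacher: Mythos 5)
Your proposal is correct and follows essentially the same route as the paper: substitute the coefficient formula into Equation \ref{eq:alex poly 2}, regroup as $\sum_i t^{mn(n-1-2i)/2}h_i(t)$ with $h_i(t)=\sum_j C(n)_{i+1,j+1}|_{q=t^{1/2}}\Delta(\mathcal{L}_j)$, take $r$ to be the largest index with $h_i\neq 0$, and invoke the $t\leftrightarrow t^{-1}$ symmetry of the Alexander polynomial to force $r\geq (n-1)/2$. Your treatment of the symmetry step (the pairing $h_i\leftrightarrow h_{n-1-i}$ and the role of the $m$-dependent exponent shift) is in fact more explicit than the paper's, which simply asserts that symmetry reduces the vanishing check to $r\geq\frac{n-1}{2}$.
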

\begin{proof}
	It follows from Proposition  \ref{prop:alex} and Lemma \ref{lem:stab}. The Laurent series $h_{\mathcal{L}}(t)$ is given by
	\begin{equation}
 \sum_{j=0}^{n-1}C(n)_{n,j+1}|_{q =t ^{1/2}}\Delta(\mathcal{L}_j),
		\label{eq:laurent}
	\end{equation} 
and $r = n-1$, given that this Laurent series does not vanish. If it vanishes, then by a similar argument as in Lemma \ref{lem:stab}, the first $k$ terms of $\Delta(\mathcal{L}_m)$ are the same as the first $k$ terms of 
\begin{equation*}
 t^{mn(-n+3)/2}\sum_{j=0}^{n-1}C(n)_{n-1,j+1}|_{q =t ^{1/2}}\Delta(\mathcal{L}_j):=t^{mn(-n+3)/2}h_{n-2,\mathcal{L}}(t),
\end{equation*} 
 given that $h_{n-2,\mathcal{L}}(t)$ doesn't vanish. 

In general, let \[h_{r,\mathcal{L}}(t) :=\sum_{j=0}^{n-1}C(n)_{r+1,j+1}|_{q =t ^{1/2}}\Delta(\mathcal{L}_j). \] If $h_{r,\mathcal{L}}(t)$ doesn't vanish, while $h_{r+1,\mathcal{L}}(t)$, $h_{r+2,\mathcal{L}}(t),...,h_{n-1,\mathcal{L}}(t)$ all vanish, then the first $k$ terms of $\Delta(\mathcal{L}_m)$ are the same as the first $k$ terms of \[t^{mn(n-1-2r)/2}h_{r,\mathcal{L}}(t).\] This is the integer $r$ as stated in the proposition, and $h_{\mathcal{L}}(t) = h_{r,\mathcal{L}}(t).$ If $h_{r},\mathcal{L}(t)$ vanishes for all integers $r \in [\frac{n-1}{2},n-1]$ (note by the symmetry of the Alexander polynomials, it is enough for to assume this holds for $r\geq \frac{n-1}{2}$), then $\Delta(\mathcal{L}_m)$ also vanishes for each $m$, and the statement trivially holds.
\end{proof}

\begin{remark}
	By the symmetry of the Alexander polynomial, we get similar stabilization results for the last few terms of $\Delta(\mathcal{L}_m)$ as well. 
\end{remark}

\begin{remark}
Since the Laurent series $h_{\mathcal{L}}(t)$ is defined as some $\mathbb{C}(q)$-linear combinations of the Alexander polynomials $\Delta(\mathcal{L}(m))$ for $m\in \{0,1,2,...,n-1\}$, it satisfies the usual skein relations for Alexander polynomials if we change a crossing away from the $n$-parallel strands.
\end{remark}

\begin{example}
	When $n=2$, \[C(2)|_{q = t^{1/2}} = \dfrac{1}{t^{-1}-t}\begin{pmatrix}
		t^{-1}&-1 \\
		-t&1	
	\end{pmatrix},\] 
and the Equation \ref{eq:alex poly} becomes \[\Delta(\mathcal{L}_{m}) = \dfrac{-t^{-(m-1)}+t^{m-1}}{ t^{-1}-t}\Delta(\mathcal{L}_0) +\dfrac{t^{-m}-t^{m}}{ t^{-1}-t} \Delta(\mathcal{L}_1),\]
In this case, it is also easy to get the expression of $\Delta(\mathcal{L}_m)$ from the inductive expression \[\Delta(\mathcal{L}_{m+1}) = (t+\dfrac{1}{t})\Delta (\mathcal{L}_m) - \Delta(\mathcal{L}_{m-1}),\] by using the usual skein relation of the Alexander polynomials three times.

The Laurent series $h_{\mathcal{L}}(t)$ is given by 
\[h_{\mathcal{L}}(t) = (t\Delta(\mathcal{L}_1)-t^2\Delta(\mathcal{L}_0))\sum_{i=0}^{\infty}t^{2i}.\]
\end{example}
\begin{example}
When $n=3$, 
   \[C(3)|_{q = t^{1/2}} = \dfrac{1}{t^{-6}-2t^{-3}+2t^{3}-t^6} \begin{pmatrix}
   t^{-6}-t^{-3}&-t^{-6}+1&t^{-3}-1 \\
   	-t^{-3}+t^{3}&t^{-6}-t^6& -t^{-3}+t^3 \\
   	t^3-t^6&-1+t^6&1-t^3	
   \end{pmatrix},\] 
and 

\begin{equation*}
	\Delta(\mathcal{L}_m) = \dfrac{1}{t^{-6}-2t^{-3}+2t^{3}-t^6}(F_0(t)\Delta(\mathcal{L}_0)+F_1(t)\Delta(\mathcal{L}_1)+F_2(t)\Delta(\mathcal{L}_2)),
\end{equation*}
where \begin{equation*}
	\begin{split}
		&F_0(t) = t^{-3m+3}-t^{-3m+6} -t^{-3}+t^{3}+t^{3m-6}-t^{3m-3},\\
		&F_1(t) =-t^{-3m}+t^{-3m+6}+t^{-6}-t^6-t^{3m-6}+ t^{3m},\\
		&F_2(t) =t^{-3m}-t^{-3m+3}-t^{-3}+t^3+t^{3m-3} -t^{3m}.
	\end{split}
\end{equation*}

The Laurent series $h_{\mathcal{L}}(t)$ is 
 \[h_{\mathcal{L}}(t) = \dfrac{(t^9-t^{12})\Delta(\mathcal{L}_0)+(-t^6+t^{12})\Delta(\mathcal{L}_1)+(t^6-t^9)\Delta(\mathcal{L}_2)}{1-2t^{3}+2t^{9}-t^{12}},\] where we expand the inverse of the denominator as a power series in $t$, given that it doesn't vanish. If it vanishes, then \[\Delta(\mathcal{L}_m) = \dfrac{(-t^{-3}+t^3)\Delta(\mathcal{L}_0) +(t^{-6}-t^6)\Delta(\mathcal{L}_1)+(-t^{-3}+t^3)\Delta(\mathcal{L}_2)}{t^{-6}-2t^{-3}+2t^{3}-t^6},\] independent of $m$.

\end{example}

\begin{example}
Suppose $L= T_{n,l}$ is the torus knot where $gcd(n,l)=1$. Let $\mathcal{L}$ be the link diagram of $L$ which is given by the closure of the braid $(\sigma_{n-1}\sigma_{n-2}...\sigma_1)^l\in B_n$, where all strands are oriented in the same direction. In this case, $\mathcal{L}_{m}$ represents the torus knot $T_{n,l+mn}$. From the formula of the Alexander polynomial of torus knots
\begin{equation*}
	\begin{split}
		 \Delta(T_{n,l+mn}) &= t^{-(l+mn-1)(n-1)/2}\dfrac{(1-t^{(l+mn)n})(1-t)}{ (1-t^{l+mn})(1-t^n) }\\
		 &=t^{(n-1)/2} \dfrac{1-t}{1-t^n}\sum_{i=0}^{n-1}t^{(l+mn)(n-1-2i)/2},
	\end{split}
\end{equation*}
one can see that as $m\to\infty$, the first few terms of $\Delta(T_{n,l+mn})$ in the increasing order of degree of $t$ are obtained when $i =n-1$ in the summation, which are the same as the first few terms of \[t^{-(l-1+mn)(n-1)/2}\dfrac{1-t}{1-t^n}.\] 
Therefore, we get directly the limiting Laurent series $h_{\mathcal{L}}(t)$ is given by \[h_\mathcal{L}(t) = t^{-(l-1)(n-1)/2}\dfrac{1-t}{1-t^n}. \]

Note that this is the same as the one we obtained in the proof of Proposition \ref{prop:stab}. By Equation \ref{eq:laurent}, and $\mathcal{L}_j = T_{n,l+jn}$, we have \begin{equation}
	\begin{split}
		h_{\mathcal{L}}(t) &= \sum_{j=0}^{n-1}C(n)_{n,j+1}|_{q=t^{1/2}}\Delta(T_{n,l+jn})\\
		&= \sum_{j=0}^{n-1}C(n)_{n,j+1}|_{q=t^{1/2}}t^{(n-1)/2}\dfrac{1-t}{1-t^n}\sum_{i=0}^{n-1}t^{(l+jn)(n-1-2i)/2}\\
		& = \sum_{i=0}^{n-1}t^{(n-1)/2}t^{l(n-1-2i)/2}\dfrac{1-t}{1-t^n}\sum_{j=0}^{n-1}C(n)_{n,j+1}|_{q=t^{1/2}}B(n)_{j+1,i+1}|_{q=t^{1/2}}\\
		& = \sum_{i=0}^{n-1}t^{(n-1)/2}t^{l(n-1-2i)/2} \dfrac{1-t}{1-t^n}\delta_{n,i+1}\\
		&=t^{-(l-1)(n-1)/2}\dfrac{1-t}{1-t^n},
	\end{split}
\end{equation}
 where $B(n)$ is the matrix defined in Definition \ref{def:vandermonde}, and $C(n)$ is the inverse matrix of $B(n)$.
\end{example}
 
In this sense, torus knots are the prototypical examples of the stabilization result on Alexander polynomials under adding twists. Proposition \ref{prop:stab} says the Alexander polynomials of other links behave in a similar way under adding full twists as the torus knots. 

\begin{example}
	
	\label{ex:non-zero}
	 This time we start with the unlink $L$ of $n$ components, and the diagram $\mathcal{L}$ we use for $L$ is the closure of the identity element in the braid group $B_n$ with the same orientation. Then $\mathcal{L}_m$ represents the $n$-components torus link $T_{n,mn}$. The Alexander polynomial of $T_{n,mn}$ is given by the formula: 
	 \[
	 	\Delta(T_{n,mn})=t^{-(mn-1)(n-1)/2} \dfrac{1-t}{1-t^n}(1-t^{mn})^{n-1}.
\] 
The first few terms of $\Delta(T_{n,mn})$ as $m\to \infty$ are the same as the first few terms of  \[t^{-(mn-1)(n-1)/2} \dfrac{1-t}{1-t^n}.\]
Therefore, the limiting Laurent series $h_{\mathcal{L}}(t)$ is given by \[h_{\mathcal{L}}(t) =\dfrac{1-t}{1-t^n}. \]
As in the previous example, this is the same as the $h_{\mathcal{L}}(t)$ computed using Equation \ref{eq:laurent}.
\end{example}

\bibliographystyle{amsalpha}
\bibliography{biblio}
\end{document}